\theoremstyle{plain}
\newtheorem*{theorem*}{Theorem}
\newtheorem{theorem}{Theorem}[section]
\newtheorem{lemma}[theorem]{Lemma}
\newtheorem{claim}[theorem]{Claim}
\newtheorem{proposition}[theorem]{Proposition}
\newtheorem*{claim*}{Claim}
\newtheorem{conjecture}[theorem]{Conjecture}
\theoremstyle{remark}
\def\N{\mathbb{N}}
\DeclareMathOperator\Deg{d}
\DeclareMathOperator\DegCo{\overline{d}}
\DeclareMathOperator{\EV}{\mathbb{E}}
\DeclareMathOperator{\PV}{\mathbb{P}}
\let\emptyset\varnothing
\let\eps\varepsilon
\let\originalleft\left
\let\originalright\right
\renewcommand{\left}{\mathopen{}\mathclose\bgroup\originalleft}
\renewcommand{\right}{\aftergroup\egroup\originalright}
\def\imod#1{\allowbreak\mkern10mu({\operator@font mod}\,\,#1)}
\begin{document}

\title{Induced subgraphs with many distinct degrees}

\author{Bhargav Narayanan}
\address{Department of Pure Mathematics and Mathematical Statistics, University of Cambridge, Wilberforce Road, Cambridge CB3\thinspace0WB, UK}
\email{b.p.narayanan@dpmms.cam.ac.uk}

\author{Istv\'{a}n Tomon}
\address{Department of Pure Mathematics and Mathematical Statistics, University of Cambridge, Wilberforce Road, Cambridge CB3\thinspace0WB, UK}
\email{i.tomon@dpmms.cam.ac.uk}

\date{13 August 2016}
\subjclass[2010]{Primary 05D10; Secondary 05C07}

\begin{abstract}
Let $\hom(G)$ denote the size of the largest clique or independent set of a graph $G$. In 2007, Bukh and Sudakov proved that every $n$-vertex graph $G$ with $\hom(G) = O(\log n)$ contains an induced subgraph with $\Omega(n^{1/2})$ distinct degrees, and raised the question of deciding whether an analogous result holds for every $n$-vertex graph $G$ with $\hom(G) = O(n^\eps)$, where $\eps > 0$ is a fixed constant. Here, we answer their question in the affirmative and show that every graph $G$ on $n$ vertices contains an induced subgraph with $\Omega((n/\hom(G))^{1/2})$ distinct degrees. We also prove a stronger result for graphs with large cliques or independent sets and show, for any fixed $k \in \N$, that if an $n$-vertex graph $G$ contains no induced subgraph with $k$ distinct degrees, then $\hom(G) \ge n/(k-1)-o(n)$; this bound is essentially best-possible.
\end{abstract}

\maketitle

\section{Introduction}
A subset of the vertices of a graph is called \emph{homogeneous} if it induces either a clique or an independent set. What can one say about the structure of graphs with no large homogeneous sets? This is a central question in graph Ramsey theory and has received considerable attention over the last sixty years. Let $\hom(G)$ denote the size of the largest homogeneous set of a graph $G$. Erd\H{o}s and Szekeres~\citep{largeangle} proved that $\hom(G) \ge (\log n)/2$ for any graph $G$ on $n$ vertices and subsequently, Erd\H{o}s~\citep{upperramsey} used probabilistic arguments to demonstrate the existence of an $n$-vertex graph $G$ with $\hom(G) \le 2\log n$; here, and throughout the paper, all logarithms are base 2. Despite considerable effort, see~\citep{grol, bip_const, Cohen, Chatto} for instance, we know of no deterministic constructions of graphs with no large homogeneous sets; this suggests that such graphs should perhaps `look like' random graphs. This belief is supported by many results which show that graphs with no large homogeneous sets possess many of the same properties as random graphs.

For a constant $C>0$, we say that an $n$-vertex graph $G$ is \emph{$C$-Ramsey} if $\hom(G) \le C\log n$. There are a number of results which show that Ramsey graphs share various properties with dense random graphs. For example, Erd\H{o}s and Szemer\'edi~\citep{density} proved that Ramsey graphs must have edge densities bounded away from $0$ and $1$. Pr\"{o}mel and R\"{o}dl~\citep{universal} later showed that every $C$-Ramsey graph on $n$ vertices is $(\delta\log n)$-universal for some positive constant $\delta = \delta(C)$; here, a graph is \emph{$k$-universal} if it contains an induced copy of every graph on at most $k$ vertices. As a final example, let us mention that Shelah~\citep{shelah} proved that Ramsey graphs contain exponentially many non-isomorphic induced subgraphs.

The results of this paper are motivated by a line of questioning proposed by Bukh and Sudakov. Bukh and Sudakov~\citep{diffdegrees} proved, settling a conjecture of Erd\H{o}s, Faudree and S\'os~\citep{deg_ques}, that every $C$-Ramsey graph on $n$ vertices contains an induced subgraph with at least $\delta n^{1/2}$ distinct degrees for some constant $\delta(C) > 0$. They then raised the possibility of a similar result holding for graphs with much larger homogeneous sets, and suggested in particular that for any $0 <\eps <1/2$, every $n$-vertex graph $G$ with $\hom(G)<n^{\eps}$ should contain an induced subgraph with $\Omega(n^{1/2-\eps})$ distinct degrees; by building on their work, we establish this fact as a special case of Theorem~\ref{mainthm1} below. 

For a graph $G$, let $f(G)$ denote the largest integer $k$ for which $G$ contains an induced subgraph with $k$ distinct degrees. Our first result is the following.

\begin{theorem}\label{mainthm1}
For every graph $G$ on $n$ vertices, we have
\[f(G) \ge \frac{1}{250}\left(\frac{n}{\hom(G)}\right)^{1/2}.\]
\end{theorem}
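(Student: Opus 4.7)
Write $h = \hom(G)$ and $k = \lceil (n/h)^{1/2}/250 \rceil$; the goal is to show $f(G) \geq k$. Proceeding by contradiction, suppose $f(G) < k$. The aim is to derive a homogeneous set in $G$ of size greater than $h$, contradicting the definition of $\hom(G)$.

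The first step is an iterative degree-class refinement. Since $f(H) \leq f(G) < k$ for every induced subgraph $H$, each such $H$ has a degree class of size at least $|V(H)|/(k-1)$. Starting from $W_0 = V(G)$ and letting $W_{i+1}$ be a largest degree class of $G[W_i]$ produces a nested family with $|W_i| \geq n/(k-1)^i$ in which every $v \in W_i$ has the same number of neighbors in each previously stripped layer $W_{j-1} \setminus W_j$, for $j \leq i$. Given the scaling of $k$, after only a handful of iterations I obtain a set $W$ of size $\Omega(h)$ (in fact, at least a large constant multiple of $h$) on which the degree structure into the rest of the graph is extremely rigid.

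Next, I would use the structured set $W$ together with random sampling to force either many distinct degrees or a large near-twin cluster. Adding a random subset $R$ of some previously stripped layer back to $W$ produces an induced subgraph in which each $v \in W$ has degree $\deg_{G[W]}(v) + |N(v) \cap R|$. The second summand has constant mean across $v \in W$ (by the refinement), while its fluctuations encode how pairs of vertices in $W$ differ in their neighborhoods into the stripped layer. Via an anti-concentration argument (local central limit type), either many vertices of $W$ attain distinct degrees in $G[W \cup R]$, contradicting $f(G) < k$; or else the pairwise symmetric differences $|N(u) \triangle N(v)|$ for $u, v \in W$ must be small, forcing most of $W$ into a collection of pairwise near-twins.

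In this latter case, a Ramsey-type argument applied to a large near-twin cluster inside $W$ produces a clique or an independent set of size greater than $h$, delivering the required contradiction. The main obstacle, and the most delicate part of the proof, is this third step: one must simultaneously choose the near-twin threshold and the cluster size so that the sampling step and the Ramsey step cannot both fail. The precise balance — which is reflected in the constant $250$ and in the square-root exponent in $(n/h)^{1/2}$ — requires careful bookkeeping and is the crux of the argument.
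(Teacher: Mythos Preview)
Your plan diverges substantially from the paper's argument, and as written it has a real gap at the interface between the sampling step and the Ramsey step.

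When you add back a random $R$ from a stripped layer $L$, the degree of $v\in W$ in $G[W\cup R]$ is $\Deg_{W}(v)+|\Gamma(v)\cap R|$. Anti-concentration for the difference of two such variables is governed by $|(\Gamma(u)\cap L)\triangle(\Gamma(v)\cap L)|$, the symmetric difference \emph{restricted to $L$}. So the failure of anti-concentration only tells you that many pairs $u,v\in W$ agree closely on $L$; it says nothing about $G[W]$ itself. One can have every vertex of $W$ with the same neighbourhood in $L$ while $G[W]$ is, say, a dense regular graph with $\hom(G[W])=O(\log|W|)$. In that scenario your sampling produces no new distinct degrees and your Ramsey step has nothing to bite on: there are no \emph{global} near-twins, so you cannot extract a homogeneous set of size exceeding $h$ from $W$. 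The object Step~2 controls is simply not the object Step~3 needs. (Even if you instead sample $R$ from within $W$, a back-of-the-envelope balance of the near-twin threshold $K$ against the cluster size --- you need both $\sqrt{K}\gtrsim k$ for anti-concentration and $|W|/K\gtrsim h$ for the homogeneous set --- seems to give only $f(G)\gtrsim(n/h)^{1/4}$, not the square root.)

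The paper bypasses this by never decomposing the graph. It takes a single $1/2$-random $U\subset V$, bounds $\PV(\Deg_U(x)=\Deg_U(y))=O(\delta(x,y)^{-1/2})$ with $\delta$ the full neighbourhood-distance, and then --- this is the idea your sketch lacks --- relates $\delta$ directly to $\hom(G)$: if $y\notin\Gamma(x)$ then $\delta(x,y)\ge\Deg_{\overline{\Gamma}(x)}(y)$, so Caro--Wei on $G[\overline{\Gamma}(x)]$ gives $\sum_{y\in\overline{\Gamma}(x)}1/(\delta(x,y)+1)\le\hom(G)$, and symmetrically on $\Gamma(x)$. Summing over $x$ and applying Cauchy--Schwarz yields $\EV[e(D)]=O(\sqrt{n^3\hom(G)})$ for the ``same-degree'' graph $D$ on $U$; one more application of Caro--Wei to $D$ then gives an independent set of size $\Omega(\sqrt{n/\hom(G)})$, i.e.\ that many distinct degrees in $G[U]$. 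No iterative refinement and no separate Ramsey step are needed.
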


We believe that this result is far from sharp however; for example, we are unable to construct an $n$-vertex graph $G$ either with $\hom(G)<n^{1/2}$ and $f(G) = o(n^{1/2})$, or with $\hom(G)\ge n^{1/2}$ and $f(G) = o(n/\hom(G))$. 

Our next result sharpens Theorem~\ref{mainthm1} for graphs with very large homogeneous sets. For a positive integer $k \in \N$, the disjoint union of $n/(k-1)$ cliques each of size $k-1$ gives us an example of a graph $G$ on $n$ vertices with $f(G) = k-1$ and $\hom(G) = \max\{n/(k-1), k-1\}$. We prove that if $k$ is fixed and $n$ is sufficiently large, then such a construction is essentially best-possible.

 \begin{theorem}\label{mainthm2}
Fix $k \in \N$ and $\eps>0$. If $n$ is sufficiently large, then $f(G) \ge k$ for every $n$-vertex graph $G$ with $\hom(G)< n/(k-1+\eps)$.
 \end{theorem}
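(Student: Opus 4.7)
I plan to prove the theorem by induction on $k \ge 2$. The base case $k = 2$ is direct: if $f(G) \le 1$, every induced subgraph of $G$ is regular, and applying this to both $G$ and $G - v$ for each $v \in V(G)$ forces each vertex of $G$ to be isolated or universal; combined with regularity of $G$ itself, this gives $G \in \{K_n, \overline{K_n}\}$ and hence $\hom(G) = n > n/(1+\eps)$.

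For the inductive step, fix $k \ge 3$ and $\eps > 0$, and assume the theorem for $k - 1$ with some parameter $\eps' < \eps$. Let $G$ be an $n$-vertex graph with $f(G) \le k - 1$ and suppose for contradiction that $\hom(G) < n/(k - 1 + \eps)$. Theorem~\ref{mainthm1} gives $\hom(G) \ge c_k n$ with $c_k = 1/(250(k-1))^2$, so after possibly passing to the complement I may fix a \emph{maximum} clique $S \subseteq V(G)$ with $|S| \ge c_k n$. Let $V' = \{v \in V \setminus S : N(v) \cap S = \emptyset\}$; the strategy is to establish (i)~$|V'| \ge n - |S| - o(n)$ and (ii)~$f(G[V']) \le k - 2$. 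The inductive hypothesis applied to $G[V']$ then yields $\hom(G[V']) \ge |V'|/(k-2+\eps')$; combining with $\hom(G) \ge |S|$ and $|V'| \ge n - |S| - o(n)$, the function $|S| \mapsto \max(|S|,\,(n-|S|-o(n))/(k-2+\eps'))$ takes its minimum value $(n-o(n))/(k-1+\eps')$ at the crossover $|S| = (n-o(n))/(k-1+\eps')$, so $\hom(G) \ge (n-o(n))/(k-1+\eps') > n/(k-1+\eps)$ for $\eps' < \eps$ and $n$ large, the desired contradiction.

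Claim~(ii) is the cleaner of the two. For any induced subgraph $H \subseteq G[V']$ with $|H| \le |S| - 1$, every vertex of $H$ has $H$-degree at most $|H| - 1 < |S| - 1$; since there are no edges between $S$ and $V'$, the induced subgraph $G[S \cup H]$ has degree set $\{|S|-1\} \cup \deg_H(H)$ of cardinality exactly $1 + |\deg_H(H)|$, so $f(G) \le k-1$ forces $|\deg_H(H)| \le k - 2$. Combining this with a ``small-witness'' lemma --- any graph with $f \ge k - 1$ contains an induced subgraph on $N_{k-1} = O_k(1)$ vertices realizing $k - 1$ distinct degrees, and $|S| \gg N_{k-1}$ for $n$ large --- yields $f(G[V']) \le k - 2$.

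The main obstacle is claim~(i): showing that almost every vertex of $V \setminus S$ has no neighbor in $S$. I would derive this from $f(G) \le k-1$ applied to induced subgraphs $G[S \cup T]$ for small subsets $T \subseteq V \setminus S$. For each $v \in V \setminus S$ with $0 < |N(v) \cap S| < |S|$, the subgraph $G[S \cup \{v\}]$ exhibits three candidate degrees --- $|S|$, $|S|-1$, and $|N(v) \cap S|$ --- with the value $|N(v) \cap S| = |S|$ forbidden by maximality of $S$. For $k = 3$ this already forces $|N(v) \cap S| \in \{0, |S|-1\}$ for every $v$, and a check on $G[S \cup \{v_1, v_2\}]$ rules out simultaneous presence of the two types; the remaining ``almost-attached'' subcase gives a complete multipartite structure whose $f \le 2$ constraint forces parts of size at most $2$, yielding $|S| \ge n/2$ directly. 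For general $k$ I expect an iterative refinement of the choice of $S$ (re-selecting to absorb persistent bad vertices) together with careful pigeonhole on the $O_k(1)$ adjacency types exposed by looking at $G[S \cup T]$ for $|T|$ up to some constant depending on $k$; quantitatively controlling the exceptional set through this recursion is the chief technical challenge, and it is where I expect the bulk of the proof's work to lie.
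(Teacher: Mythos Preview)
Your approach is genuinely different from the paper's. The paper does not induct on $k$; instead it fixes $k$ and argues probabilistically. It first shows (Claim~4.1) that the assumption $f(G)<k$ forces \emph{every} large vertex set to contain many pairs at small neighbourhood-distance $\delta$, and from this builds a partition $A_1,\dots,A_m$ of almost all of $V$ with $\delta(x,y)<K$ inside each block and $\delta(x,y)>J$ between blocks (Claim~4.2). Propositions~2.1 and~2.2 then force each $G[A_i]$ (or its complement) to be essentially a bounded-degree graph with almost all degrees at most $k-2$; the bound on $\hom(G)$ controls how many vertices in $A_i$ have degree below a threshold $r_i$, and one reads off that a random induced subgraph exhibits $\sum_i(r_i+1)\ge k$ distinct degrees. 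The blocks, not a single maximum clique, carry the structure.

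Your proposal has a real gap at claim~(i). You need that, under $f(G)\le k-1$, almost every vertex outside a maximum clique $S$ has \emph{no} neighbour in $S$. For $k=3$ you sketch how looking at $G[S\cup\{v\}]$ forces $|N(v)\cap S|\in\{0,|S|-1\}$ and then handle the ``almost-attached'' case separately; this can indeed be completed. But for $k\ge 4$ the single-vertex test $G[S\cup\{v\}]$ imposes no constraint at all (three degrees $|S|,|S|-1,|N(v)\cap S|$ are allowed), and the structure of the attached vertices can be complicated: think of the paper's example~(3), a disjoint union of $k/b$ complete $(n/k)$-partite graphs with parts of size $b$, where within the component containing $S$ every vertex outside $S$ has $|S|-1$ neighbours in $S$, so $V'$ misses a $b/k$ fraction of $V$. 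That particular graph has $f=k$, so it is not itself a counterexample, but it shows that the behaviour you want does not follow from looking at $G[S\cup T]$ for bounded $T$ alone --- you would need to \emph{prove} that any graph with this kind of attachment pattern already has $f\ge k$, which is essentially the content of the theorem. Your ``iterative refinement of $S$'' and ``pigeonhole on adjacency types'' are not concrete enough to evaluate, and I do not see how to carry them out without rediscovering something like the paper's partition.

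Two smaller remarks. First, your ``small-witness lemma'' (if $f(H)\ge k-1$ then some induced subgraph on $O_k(1)$ vertices has $k-1$ distinct degrees) is asserted without proof; I am not aware that it is known, and it is not obvious. You do not actually need it: given $H\subset G[V']$ with $k-1$ distinct degrees, at least one of the $k$ values $|S|-1,|S|-2,\dots,|S|-k$ lies outside $\deg_H(H)$, and taking a sub-clique $S'\subset S$ of the corresponding size gives $G[S'\cup V(H)]$ with $k$ distinct degrees. Second, your optimisation at the end is correct, so if claims~(i) and~(ii) were in hand the induction would close; the entire difficulty really is~(i).
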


We remark that with $\eps$ fixed, the minimal $n$ for which we are able to verify Theorem~\ref{mainthm2} is exponential in $k$. We further believe that it should be possible to prove Theorem~\ref{mainthm2} without an $\eps$-dependent error term; however, we are unable to do this at present.

This paper is organised as follows. In the next section, we establish some notation and collect together a few basic tools. In Section~\ref{section2}, we introduce the main ideas used in this paper and prove Theorem~\ref{mainthm1}. In Section~\ref{section3}, we refine the ideas used to prove Theorem~\ref{mainthm1} and prove Theorem~\ref{mainthm2}. We conclude with a discussion of some open problems in Section~\ref{conc}.

For the sake of clarity of presentation, we systematically omit floor and ceiling signs whenever they are not crucial. We also make no attempt to optimise the absolute constants in our results.

\section{Preliminaries}\label{prelim}
In this short section, we introduce some notation and collect together some facts that we shall rely on repeatedly in the sequel.

\subsection{Notation}
For us, a pair $\{x,y\}$ will always mean an unordered pair with $x \ne y$. For a set $X$, we write $X^{(2)}$ for the family of all pairs on the ground set $X$.

Let $G = (V,E)$ be a graph. We write $v(G)$ and $e(G)$ respectively for the number of vertices and edges of $G$. The complement of $G$ is denoted by $\overline{G}$. The neighbourhood of a vertex $x$ is denoted by $\Gamma(x)$, and the non-neighbourhood of $x$ is denoted by $\overline{\Gamma}(x)$. Also, let $\Deg(x)=|\Gamma(x)|$ denote the degree of $x$ in $G$, and let $\DegCo(x)=|\overline{\Gamma}(x)|$ be the degree of $x$ in $\overline{G}$.

For a subset $U \subset V$, we write $G[U]$ for the subgraph of $G$ induced by $U$. For a vertex $x \in V$, we write $\Gamma_{U}(x)$ for the set $\Gamma(x)\cap U$ and $\overline{\Gamma}_{U}(x)$ for the set $\overline{\Gamma}(x)\cap U$; we also set $\Deg_{U}(x)=|\Gamma_{U}(x)|$ and $\DegCo_{U}(x)=|\overline{\Gamma}_{U}(x)|$.

Finally, we define the \emph{neighbourhood-distance} between two vertices $x,y\in V$ by
\[\delta(x,y)=|(\Gamma(x)\setminus \{y\})\triangle (\Gamma(y)\setminus\{x\})|.\]
It is not hard to check that this distance satisfies the triangle inequality, i.e., $\delta(x,y)+\delta(y,z)\ge \delta(x,z)$ for any three vertices $x,y,z\in V$.

\subsection{Graph theoretic estimates}
We need the following simple fact about the neighbourhood-distance.
\begin{proposition}\label{boundeddegree}
For each $K\in\N$, there exists a $\Delta\in\N$ such that the following holds. If $G$ is a graph with $\delta(x,y)\le K$ for all $x,y\in V(G)$, then either $G$ or $\overline{G}$ has maximum degree at most $\Delta$.
\end{proposition}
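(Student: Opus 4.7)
The plan is to fix an arbitrary vertex $v^* \in V(G)$ and use the hypothesis $\delta(v^*, u) \le K$ to force the neighbourhood of every other vertex to be very close to $S := \Gamma(v^*)$. A simple two-way edge-count between $S$ and $T := V(G) \setminus (S \cup \{v^*\})$ will then show that $\Deg(v^*)$ cannot be intermediate: it must be either very small or very close to $n - 1$. The conclusion follows because all degrees in $G$ lie within $K + 2$ of each other.

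First I would record the basic observation that, for any two vertices $u, v$, one has $|\Gamma(u) \triangle \Gamma(v)| \le \delta(u, v) + 2 \le K + 2$ (the extra $2$ accounting for whether $uv$ is an edge); in particular $|\Deg(u) - \Deg(v)| \le K + 2$. Applied to the pair $(v^*, s)$ for $s \in S$, this gives $|\Gamma(s) \triangle S| \le K + 2$, and since $v^* \in \Gamma(s) \setminus S$ already contributes one element to this symmetric difference, I may conclude $|\Gamma(s) \cap T| \le K + 1$. Applied instead to $(v^*, t)$ for $t \in T$, the bound is sharper, $|\Gamma(t) \triangle S| \le K$ (because $v^* t$ is a non-edge), so $|\Gamma(t) \cap S| \ge |S| - K$.

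The key step is then to count the edges between $S$ and $T$ in two ways:
\[|T|(|S| - K) \le e(S, T) \le |S|(K + 1).\]
Provided $|S| \ge 2K$, this rearranges to $|T| \le 2(K+1)$ and hence $\Deg(v^*) = |S| \ge n - 2K - 3$. In other words, every vertex $v$ of $G$ satisfies either $\Deg(v) \le 2K - 1$ or $\Deg(v) \ge n - 2K - 3$.

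To finish, I combine this dichotomy with the degree-concentration recorded in the first step: if every vertex has degree at most $2K - 1$, then $G$ has maximum degree at most $2K - 1$; otherwise some vertex has degree at least $n - 2K - 3$, and concentration then forces every degree to be at least $n - 3K - 5$, so $\overline{G}$ has maximum degree at most $3K + 4$. Taking $\Delta$ of the order $3K + 4$ (with the statement being trivial when $n$ is bounded in terms of $K$) completes the proof. There is no single hard step here — the whole argument really hinges on spotting the edge-count between $S$ and $T$, after which everything unravels essentially automatically.
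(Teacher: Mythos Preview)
Your proof is correct and follows essentially the same approach as the paper: fix a vertex $v^*$, double-count edges between $S=\Gamma(v^*)$ and its complement to force $|S|$ to be either very small or very close to $n-1$, and then propagate this dichotomy to all vertices via the degree-concentration $|\Deg(u)-\Deg(v)|\le\delta(u,v)+O(1)$. The paper arranges the count slightly differently (bounding edges and non-edges separately to get $|\Gamma(v)||\overline{\Gamma}(v)|\le K(n-1)$ directly) and obtains marginally better constants, but the substance is the same.
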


\begin{proof}
We prove the claim with $\Delta=4K$. The proposition is trivial if $v(G) \le 4K$, so we may assume that $v(G) \ge 4K + 1$. Fix a vertex $v\in V(G)$ and let $s$ and $t$ be the number of edges and non-edges between $\Gamma(v)$ and $\overline{\Gamma}(v)$ respectively.

If $x\in \Gamma(v)$, then $\Deg_{\overline{\Gamma}(v)}(x)\le\delta(v,x)\le K$. Hence, $s\le K|\Gamma(v)|$ and analogously, $t\le K|\overline{\Gamma}(v)|$. Since $s+t=|\Gamma(v)||\overline{\Gamma}(v)|$, it follows that
\[ |\Gamma(v)||\overline{\Gamma}(v)|\le K(|\Gamma(v)|+|\overline{\Gamma}(v)|) = K(v(G)  - 1). \]
Since $v(G) \ge 4K+1$, this is only possible if one of $|\Gamma(v)|$ or $|\overline{\Gamma}(v)|$ is at most $2K$. If $|\Gamma(v)|\le 2K$, then it is not hard to see that every vertex has degree at most $3K$; indeed, this follows from the trivial observation that $\Deg(x) \le \delta(x,v) + \Deg(v)$ for any $x \in V(G)$. If $|\overline{\Gamma}(v)|\le 2K$ on the other hand,  then it is clear that the maximum degree of $\overline{G}$ is at most $3K$.
\end{proof}

We shall also require the following fact.

\begin{proposition}\label{smalldegree}
For any $k,\Delta \in \N$, there exists an $L\in\N$ such that the following holds. If $G$ is a graph with $f(G) < k$ and maximum degree at most $\Delta$, then $G$ contains at most $L$ vertices of degree at least $k-1$.
\end{proposition}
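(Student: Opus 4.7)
The plan is to argue by contrapositive: assuming there are many vertices of degree at least $k-1$, we will construct an induced subgraph of $G$ with $k$ distinct degrees, contradicting $f(G) < k$. Let $W = \{v \in V(G) : \Deg(v) \ge k-1\}$; we want to bound $|W|$.

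The key structural observation is this. Suppose we can find vertices $v_1, \ldots, v_k \in W$ whose closed neighborhoods $N[v_i] = \{v_i\} \cup \Gamma(v_i)$ are pairwise disjoint. For each $i$, use $\Deg(v_i) \ge k-1 \ge i-1$ to pick an arbitrary subset $N_i' \subseteq \Gamma(v_i)$ with $|N_i'| = i - 1$, and consider the induced subgraph
\[ H = G\left[\{v_1, \ldots, v_k\} \cup N_1' \cup \cdots \cup N_k'\right]. \]
Because the $N[v_i]$ are pairwise disjoint, the neighbors of $v_i$ inside $V(H)$ are exactly $N_i'$: any neighbor of $v_i$ lies in $N[v_i]$, which meets $V(H)$ only in $\{v_i\} \cup N_i'$. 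Hence $\Deg_H(v_i) = i - 1$, and the $v_i$ witness $k$ distinct degrees in $H$, contradicting $f(G) < k$.

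It remains to find such $v_1,\ldots,v_k$ greedily, and this is where the maximum degree hypothesis enters. A vertex $u$ satisfies $N[u] \cap N[v_j] = \emptyset$ precisely when $u$ is at graph distance at least $3$ from $v_j$ in $G$; the set of vertices within distance $2$ of any fixed $v_j$ has size at most $1 + \Delta + \Delta^2$. Therefore, having chosen $v_1, \ldots, v_i \in W$ with pairwise disjoint closed neighborhoods, the set of vertices of $W$ we must avoid in selecting $v_{i+1}$ has size at most $i(1 + \Delta + \Delta^2)$. So, setting
\[ L = (k-1)(1 + \Delta + \Delta^2), \]
whenever $|W| > L$ we can continue the greedy selection all the way to $v_k$, completing the argument.

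There is no substantial obstacle here: the construction is entirely explicit and relies only on the maximum degree bound (to perform the greedy selection via the distance-$2$ ball count) together with the lower bound $\Deg(v_i) \ge k-1$ (to extract the neighbor subsets $N_i'$ of the prescribed sizes $0, 1, \ldots, k-1$).
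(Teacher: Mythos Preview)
Your proof is correct and follows essentially the same approach as the paper: both arguments find $k$ vertices of degree at least $k-1$ with pairwise disjoint closed neighbourhoods (the paper calls such vertices \emph{independent}), then build the induced subgraph on these $k$ vertices together with $i-1$ chosen neighbours of the $i$th vertex to force degrees $0,1,\dots,k-1$. The only cosmetic differences are that the paper bounds the number of vertices dependent on a given vertex by $\Delta + \Delta(\Delta-1)=\Delta^2$ (yielding $L=(\Delta^2+1)k$) rather than your $1+\Delta+\Delta^2$, and phrases the selection via a pigeonhole count rather than an explicit greedy argument.
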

\begin{proof} We shall prove the claim with $L=(\Delta^{2}+1)k$. We say that two vertices $x,y \in V(G)$ are \emph{independent} if $(\Gamma(x)\cup \{x\})\cap (\Gamma(y)\cup \{y\})=\emptyset$. As the maximum degree of $G$ is at most $\Delta$, a vertex of $G$ is dependent on at most $\Delta + \Delta(\Delta-1) = \Delta^2$ other vertices. Let $S\subset V(G)$ be the set of vertices of degree at least $k-1$ and suppose for the sake of contradiction that $|S| > L$. Since $|S|/(\Delta^{2}+1) > k$, we can select $k$ pairwise independent vertices from $S$; let $x_{1},x_2,\dots,x_{k}$ be these vertices. For $1 \le i \le k$, let $X_i$ be an arbitrary $(i-1)$-element subset of $\Gamma(x_{i})$ and let
\[X =\{x_{1}, x_2, \dots,x_{k}\}\cup X_{1} \cup X_2 \cup \dots \cup X_k.\]
Since the vertices $x_{1}, x_2,\dots,x_{k}$ are pairwise independent, we see that $\Deg_{X}(x_{i})=i-1$ for each $1 \le i \le k$. It follows that $x_{1},x_2,\dots,x_{k}$ have different degrees in $G[X]$; this contradicts the fact that $f(G) < k$.
\end{proof}

Finally, we need the Caro--Wei theorem~\citep{caro, wei} which refines Tur\'{a}n's theorem.
\begin{proposition}\label{iTuran}
Every graph $G$ contains an independent set of size at least
\[ \sum_{v\in V(G)} \frac{1}{\Deg(v) + 1} \ge \frac{v(G)^2}{2e(G)+v(G)}.\]
\end{proposition}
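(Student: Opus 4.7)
The plan is to establish the two inequalities separately. The first inequality is the Caro--Wei bound and I would prove it by the standard random greedy argument. Sample a uniformly random permutation $\pi$ of $V(G)$ and let $I_\pi \subseteq V(G)$ be the set of vertices $v$ such that $\pi(v) < \pi(u)$ for every neighbour $u$ of $v$. By construction $I_\pi$ is independent, since of any two adjacent vertices at most one can precede all of its neighbours in $\pi$. For a fixed vertex $v$, the event $v \in I_\pi$ depends only on the relative order of $v$ and its $\Deg(v)$ neighbours, and it occurs exactly when $v$ is the first of these $\Deg(v)+1$ vertices in $\pi$; hence $\PV(v \in I_\pi) = 1/(\Deg(v)+1)$. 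Linearity of expectation gives
\[\EV|I_\pi| = \sum_{v \in V(G)} \frac{1}{\Deg(v)+1},\]
and so some realisation attains at least this value.

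For the second inequality, I would apply the Cauchy--Schwarz inequality in the form
\[\left(\sum_{v \in V(G)} \frac{1}{\Deg(v)+1}\right)\left(\sum_{v \in V(G)} (\Deg(v)+1)\right) \ge \left(\sum_{v \in V(G)} 1\right)^{2} = v(G)^{2}.\]
Using the handshake identity $\sum_v \Deg(v) = 2e(G)$, the second factor on the left equals $2e(G)+v(G)$, and rearranging yields the claimed lower bound $v(G)^{2}/(2e(G)+v(G))$.

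Neither step presents a genuine obstacle: the whole proof is a well-known two-line argument, and the only ``choice'' is to phrase the random-ordering step cleanly so that the independence of $I_\pi$ and the probability computation are both immediate. If one preferred to avoid probability, the same bound can be obtained by induction on $v(G)$ by deleting a vertex of minimum degree together with its neighbourhood, but the random permutation proof is shorter and is the form I would present.
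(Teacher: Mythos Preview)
Your proposal is correct and matches the paper's approach exactly: the paper also takes a uniformly random ordering of $V(G)$, lets $I$ be the set of vertices preceding all their neighbours, and computes $\EV[|I|] = \sum_v 1/(\Deg(v)+1)$. The paper does not spell out the second inequality at all, so your Cauchy--Schwarz argument is a welcome addition rather than a deviation.
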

\begin{proof}
Order the vertices of $G$ uniformly at random and consider the set $I$ of those vertices that precede all their neighbours in this ordering. Clearly, $I$ is independent; the proposition follows since
\[\EV[|I|] = \sum_{v\in V(G)} \frac{1}{\Deg(v) + 1}. \qedhere\]
\end{proof}

\subsection{Probabilistic inequalities}
We need the following two well-known inequalities; the proofs of these claims may be found in~\citep{prob_book}. We shall require Markov's inequality.
\begin{proposition}\label{iMarkov}
Let $X$ be a non-negative real-valued random variable. For any $t\ge 0$, we have
\[\PV(X>t)<\frac{\EV [X]}{t}.\eqno\qed\]
\end{proposition}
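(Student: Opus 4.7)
The plan is to derive Markov's inequality from a one-line pointwise comparison between $X$ and a scalar multiple of an indicator, and then to take expectations. Since $X\ge 0$ everywhere, at every point $\omega$ of the underlying sample space we have the pointwise bound
\[ X(\omega)\ge t\cdot \mathbf{1}_{\{X>t\}}(\omega), \]
because on the event $\{X\le t\}$ the right-hand side vanishes and the left-hand side is non-negative, while on the event $\{X>t\}$ the right-hand side equals $t$ and the left-hand side is strictly larger than $t$. Taking expectations on both sides and invoking monotonicity of expectation immediately gives $\EV[X]\ge t\cdot\PV(X>t)$; dividing by $t>0$ yields the bound in its weak form. (The case $t=0$ is vacuous, since the claimed bound reads $\PV(X>0)<\infty$ under any sensible convention for $\EV[X]/0$.)

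To upgrade to the strict inequality stated in the proposition, I would distinguish two cases. If $\PV(X>t)>0$, then on the event $\{X>t\}$ the pointwise inequality $X(\omega)>t$ holds strictly, and integrating against the measure restricted to this set of positive measure produces a strict inequality between expectations, i.e.\ $\EV[X\,\mathbf{1}_{\{X>t\}}]>t\cdot\PV(X>t)$, from which $\PV(X>t)<\EV[X]/t$ follows after adding the non-negative quantity $\EV[X\,\mathbf{1}_{\{X\le t\}}]$ to the left-hand side and dividing by $t$. If instead $\PV(X>t)=0$, the conclusion reduces to the bound $0<\EV[X]/t$, which holds so long as $\EV[X]>0$; the degenerate case in which $X$ vanishes almost surely is presumably tacitly excluded, and in any event the weak form of the inequality remains valid there.

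The main obstacle here is essentially nonexistent: Markov's inequality is a direct consequence of monotonicity of expectation applied to a one-line indicator comparison, and the only measure-theoretic subtlety is the careful separation of the weak and strict forms outlined above. No probabilistic machinery beyond linearity and monotonicity of expectation is needed, which is consistent with the authors' decision to cite a textbook reference rather than reproduce the argument.
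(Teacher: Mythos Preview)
Your argument is the standard proof of Markov's inequality and is correct; the paper itself offers no proof at all, merely citing a textbook and placing a \qed\ symbol after the statement, so there is nothing to compare against. Your careful distinction between the weak and strict forms is appropriate, and you are right to flag that the strict inequality as printed fails in the degenerate case $X\equiv 0$ almost surely (and is ill-posed at $t=0$); the authors are simply being informal, since in every application within the paper $\EV[X]>0$ and $t>0$.
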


We shall also require Hoeffding's inequality.
\begin{proposition}\label{iHoeffding}
Let $X=\sum_{i=1}^{n}X_{i}$ where $X_{1},X_2,\dots,X_{n}$ are independent real-valued random variables with $0\le X_{i}\le 1$ for each $1\le i\le n$. For any $t \ge 0$, we have
\[\PV(|X-\EV[X]|\ge t)\le 2\exp\left(\frac{-2t^{2}}{n}\right). \eqno\qed\]
\end{proposition}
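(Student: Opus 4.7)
The plan is the standard Chernoff exponential moment method. For the upper tail, fix a parameter $\lambda > 0$ to be optimised later and apply Markov's inequality (Proposition~\ref{iMarkov}) to the non-negative random variable $e^{\lambda(X - \EV[X])}$; this gives
\[
\PV(X - \EV[X] \ge t) \le e^{-\lambda t}\, \EV[e^{\lambda(X-\EV[X])}],
\]
and by independence the expectation on the right factorises as $\prod_{i=1}^{n} \EV[e^{\lambda(X_i - \EV[X_i])}]$. The task therefore reduces to bounding the moment generating function of a single centred summand $Y_i = X_i - \EV[X_i]$, each of which lies almost surely in an interval of length at most $1$.

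The core of the argument---and its main difficulty---is Hoeffding's lemma: if $Y$ is a zero-mean random variable taking values in $[a,b]$, then $\EV[e^{\lambda Y}] \le \exp(\lambda^2(b-a)^2/8)$. I would prove this by exploiting the convexity of $x \mapsto e^{\lambda x}$: writing $Y = \frac{b-Y}{b-a}\,a + \frac{Y-a}{b-a}\,b$ and applying convexity gives $e^{\lambda Y} \le \frac{b-Y}{b-a}\, e^{\lambda a} + \frac{Y-a}{b-a}\, e^{\lambda b}$. Taking expectations (and using $\EV[Y]=0$) leads, after the substitution $p = -a/(b-a)$ and $u = \lambda(b-a)$, to the reduced inequality $L(u) \le u^2/8$, where $L(u) = \log\bigl((1-p)e^{-up} + p\,e^{u(1-p)}\bigr)$. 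A routine calculation gives $L(0) = L'(0) = 0$ and $L''(u) \le 1/4$ (using the elementary estimate $q(1-q) \le 1/4$ applied to the logistic-type quantity $q(u) = pe^{u(1-p)}/((1-p)e^{-up} + pe^{u(1-p)})$), and Taylor's theorem then yields $L(u) \le u^2/8$, as required.

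Applying the lemma to each $Y_i$ (with $b-a \le 1$) produces the bound $\EV[e^{\lambda(X - \EV[X])}] \le e^{\lambda^2 n/8}$, and substituting back gives $\PV(X - \EV[X] \ge t) \le \exp(-\lambda t + \lambda^2 n/8)$. Optimising by choosing $\lambda = 4t/n$ yields $\PV(X - \EV[X] \ge t) \le \exp(-2t^2/n)$. The same argument applied to the variables $1 - X_i$ (which still lie in $[0,1]$) handles the lower tail by a symmetric calculation, and adding the two one-sided bounds produces the factor of $2$ in the statement.
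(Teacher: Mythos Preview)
Your proposal is a correct sketch of the standard proof of Hoeffding's inequality via the exponential-moment (Chernoff) method and Hoeffding's lemma; the optimisation and the handling of the lower tail are both done properly.

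The paper, however, does not prove this proposition at all: it is stated with a trailing \verb|\qed| and the surrounding text explicitly says that ``the proofs of these claims may be found in~[prob\_book]''. So there is no paper proof to compare against---you have supplied a complete argument where the authors simply cite a reference. What your approach buys is self-containment; what the paper's approach buys is brevity, since Hoeffding's inequality is a standard tool and not the point of the article.
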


\section{Small homogeneous sets}\label{section2}
This section is devoted to the proof of Theorem~\ref{mainthm1}. Before we prove the result, let us give an overview of the proof. To show that a graph $G$ contains an induced subgraph with many distinct degrees, we pick a random subset $U$ of the vertices and consider the subgraph of $G$ induced by $U$. We first show that if the neighbourhood-distance between a pair of vertices $x,y\in U$ is large, then the probability of $\Deg_{U}(x)$ and $\Deg_{U}(y)$ being equal is small. Next, to show that there are many such pairs of vertices with large neighbourhood-distances, we bound the number of pairs of vertices with small neighbourhood-distances in terms of $\hom(G)$. We now make this sketch precise.

\begin{proof}[Proof of Theorem~\ref{mainthm1}] The result is trivial if our graph has fewer than 250 vertices, so we may assume that $G=(V,E)$ is a graph on $n \ge 250$ vertices. Let $U$ be a random subset of $V$ obtained by selecting each vertex of $G$ with probability $1/2$, independently of the other vertices. Define an auxiliary \emph{degree graph} $D$ on $U$ where two vertices $x,y\in U$ are joined by an edge if they have the same degree in $G[U]$; note that $D$ is the union of vertex disjoint cliques, one for each vertex degree in $G[U]$. For any pair of distinct vertices $x,y\in V$, the probability that $xy$ is an edge of $D$ is precisely $\PV(\Deg_{U}(x)=\Deg_{U}(y))/4$. Hence, the expected number of edges of $D$ is given by
\[\EV[e(D)]=\frac{1}{4}\sum_{\{x,y\} \in V^{(2)}}\PV(\Deg_{U}(x)=\Deg_{U}(y)).\]

We shall bound the probability of the event $\Deg_{U}(x)=\Deg_{U}(y)$ by the neighbourhood-distance between $x$ and $y$; a similar result appears in~\cite{diffdegrees}.
\begin{lemma}\label{distinctprob}
For any pair of distinct vertices $x,y\in V$, we have
\[\PV(\Deg_{U}(x)=\Deg_{U}(y))<\frac{20}{\sqrt{\delta(x,y)+1}}.\]
\end{lemma}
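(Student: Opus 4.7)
The natural strategy is to reduce the event $\Deg_U(x) = \Deg_U(y)$ to the event that a single binomial random variable equals a prescribed value, and then invoke the standard estimate $\max_k \binom{n}{k}2^{-n} = O(1/\sqrt{n})$.

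To set this up, partition $V \setminus \{x,y\}$ into $A = \Gamma(x)\setminus(\Gamma(y)\cup\{y\})$, $B = \Gamma(y)\setminus(\Gamma(x)\cup\{x\})$, $C = \Gamma(x)\cap\Gamma(y)$, and $D$, the non-neighbours of both. Observe that $|A|+|B| = \delta(x,y)$. Writing $X = |U\cap A|$ and $Y = |U\cap B|$, these are \emph{independent} binomial variables with parameters $(|A|,1/2)$ and $(|B|,1/2)$ respectively. If $xy\notin E$, then $\Deg_U(x) - \Deg_U(y) = X - Y$, while if $xy\in E$, one gets the correction $\Deg_U(x) - \Deg_U(y) = X - Y + \B 1[y\in U] - \B 1[x\in U]$. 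In either case, conditioning on $(\B 1[x\in U], \B 1[y\in U])$ reduces matters to bounding $\PV(X - Y = t)$ for some integer $t\in\{-1,0,1\}$.

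The key trick is the symmetry $|B| - Y \stackrel{d}{=} Y$, so that $X + (|B|-Y)$ is distributed as $\Bi(|A|+|B|, 1/2) = \Bi(\delta(x,y), 1/2)$, and hence
\[ \PV(X - Y = t) = \binom{\delta(x,y)}{t+|B|}2^{-\delta(x,y)}. \]
Since $\max_k \binom{n}{k}2^{-n} \le C/\sqrt{n}$ (by Stirling, with $C = \sqrt{2/\pi}$ asymptotically), one obtains $\PV(X - Y = t) \le C'/\sqrt{\delta(x,y)+1}$ uniformly in $t$ for $\delta(x,y)\ge 1$, and the case $\delta(x,y) = 0$ is handled by the trivial bound $\PV \le 1 < 20$. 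Averaging over the (at most four) conditioning cases preserves this bound, since the conditional estimate does not depend on the outcome being conditioned on.

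There is no substantive obstacle here; the only mild nuisance is bookkeeping the correction terms arising when $xy\in E$ and checking that the absolute constant $20$ is comfortably achievable — the argument above in fact yields a much smaller constant, so the bound stated in the lemma has plenty of slack.
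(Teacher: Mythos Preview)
Your proof is correct and uses the same setup as the paper: partition $V\setminus\{x,y\}$ by the symmetric difference of the neighbourhoods, reduce $\Deg_U(x)-\Deg_U(y)$ to a difference of independent binomials, and invoke the central binomial bound. The only divergence is in how the probability $\PV(X-Y=t)$ is estimated: you use the reflection $Y\mapsto|B|-Y$ to recognise $X+(|B|-Y)\sim\Bi(\delta(x,y),1/2)$ directly, whereas the paper writes the probability as $2^{-s-t}\sum_i\binom{s}{i}\binom{t}{i}$ (with $s=|\Gamma(x)\setminus\Gamma(y)|$, $t=|\Gamma(y)\setminus\Gamma(x)|$), bounds it crudely by $2^{-s}\max_i\binom{s}{i}$, and then uses $s\ge\delta(x,y)/2$. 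Your reflection step is equivalent to applying Vandermonde's identity to the paper's sum and is marginally sharper (no loss of a $\sqrt{2}$ factor), but the two arguments are otherwise the same.
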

\begin{proof} Let $s=|\Gamma(x)\setminus \Gamma(y)|$ and $t=|\Gamma(y)\setminus \Gamma(x)|$. Observe that $s+t$ is equal to $\delta(x,y)+2$ if $xy\in E$ and $\delta(x,y)$ otherwise. Without loss of generality, we may suppose that $s\ge t$. Therefore, $s\ge \delta(x,y)/2$ and it follows that
\begin{align*}
\PV(\Deg_{U}(x)=\Deg_{U}(y))&=\frac{1}{2^{s+t}}\sum_{i=0}^{t}\binom{s}{i}\binom{t}{i}\\
&\le \frac{1}{2^{s}}\max_{0\le i\le s}\binom{s}{i}\left(\frac{1}{2^{t}}\sum_{i=0}^{t}\binom{t}{i}\right)\\
&=\frac{1}{2^{s}}\binom{s}{\lfloor s/2\rfloor} <\frac{10}{\sqrt{s+1}} \le \frac{20}{\sqrt{\delta(x,y)+1}}.\qedhere
\end{align*}
\end{proof}

As an immediate consequence of this lemma, we have
\begin{equation}\label{equ1}
\EV [e(D)]<\sum_{\{x,y\} \in V^{(2)}}\frac{5}{\sqrt{\delta(x,y)+1}}.
\end{equation}
We now bound the right hand side of~\eqref{equ1} in terms of $\hom(G)$. Fix a vertex $x\in V$ and consider the sets $S=\Gamma(x)$ and $T=\overline{\Gamma}(x)$. Note that $\delta(x,y) \ge \Deg_{T}(y)$ for any $y\in T$. By applying Proposition~\ref{iTuran} to the graph $G[T]$, we get
\[\hom(G)\ge\sum_{y\in T}\frac{1}{\Deg_{T}(y)+1}\ge \sum_{y\in T}\frac{1}{\delta(x,y)+1}.\]
Similarly, if $y\in S$, then $\delta(x,y) \ge \DegCo_{S}(y)$. Applying Proposition~\ref{iTuran} to the complement of $G[S]$, we similarly get
\[\hom(G)\ge\sum_{y\in S}\frac{1}{\DegCo_{S}(y)+1}\ge \sum_{y\in S}\frac{1}{\delta(x,y)+1}.\]
It follows that
\[2\hom(G)\ge\sum_{y\in V\setminus \{x\}} \frac{1}{\delta(x,y)+1}.\]
Finally, by summing this inequality over all the vertices of $G$ and applying the Cauchy--Schwarz inequality, we get
\begin{equation}\label{equ2}
n\hom(G)\ge \sum_{\{x,y\} \in V^{(2)}} \frac{1}{\delta(x,y)+1} \ge \binom{n}{2}^{-1}\left(\sum_{\{x,y\} \in V^{(2)}} \frac{1}{\sqrt{\delta(x,y)+1}}\right)^{2}.
\end{equation}

From~\eqref{equ1} and~\eqref{equ2}, we conclude that
\[\EV[e(D)]<4\sqrt{n^3\hom(G)}.\]
It follows from Markov's inequality that
\[\PV\left( e(D)>12\sqrt{n^3\hom(G)}\right)<\frac{1}{3}.\]
Next, since $|U|$ is the sum of $n \ge 250$ independent indicator random variables and $\EV[|U|]=n/2$, it follows from Hoeffding's inequality that
\[\PV(|U|<n/3)<\frac{1}{3}.\]
Hence, with positive probability, the degree graph $D$ satisfies $v(D)>n/3$ and $e(D)<12(n^3\hom(G))^{1/2}$. Applying Proposition~\ref{iTuran} to $D$, we see that this graph contains an independent set of size at least
\[\frac{v(D)^{2}}{2e(D)+v(D)}\ge\frac{1}{250}\sqrt{\frac{n}{\hom(G)}}.\]
The vertices of this independent set have different degrees in $G[U]$ by the definition of $D$. Therefore, the subgraph induced by $U$ has at least $(1/250)(n/\hom(G))^{1/2}$ distinct degrees with positive probability; the result follows.
\end{proof}

 \section{Large homogeneous sets}\label{section3}
In this section, we consider graphs with large homogeneous sets and prove Theorem~\ref{mainthm2}. Before we turn to the proof of Theorem~\ref{mainthm2}, let us identify the inefficiencies in the proof of Theorem~\ref{mainthm1}; to this end, we consider some examples of graphs with large homogeneous sets that contain no induced subgraphs with many distinct degrees. Let $b,k,n\in\N$ be positive integers satisfying $b \le k\le (\log n)/2$ and consider
\begin{enumerate}
\item\label{ex1} a disjoint union of $n/k$ cliques each of size $k$,
\item\label{ex2} a disjoint union of $k$ cliques each of size $n/k$, and
\item\label{ex3} a disjoint union of $k/b$ copies of $\overline{H}$, where $H$ is the disjoint union of $n/k$ cliques each of size $b$.
\end{enumerate}
It is not hard to see that if $G$ is the graph in either of the above three examples, then $\hom(G)=n/k$ and $f(G) = k$. Suppose that $G$ is one of the graphs described above; let $U$ be a $1/2$-random subset of $V(G)$ and define the degree graph $D$ on $U$ as in the proof of Theorem~\ref{mainthm1}.

If $G$ is the graph in example~\eqref{ex1}, then with high probability, $G[U]$ has about
\[\frac{n}{2^{k}}\binom{k-1}{i}\] vertices of degree $i$ for each $0 \le i \le k-1$. This means that while $G[U]$ has $k$ distinct degrees, the $k$ cliques in the auxiliary graph $D$ have very different sizes. Hence, the final application of the Caro--Wei bound in the proof of Theorem~\ref{mainthm1} is too crude for this graph.

If $G$ is the graph in example~\eqref{ex2}, then $G[U]$ again has $k$ distinct degrees with high probability. However, notice that $\delta(x,y)$ is either $0$ or $2n/k-2$ for any pair of vertices $x,y \in V(G)$ with the former being the case for about $1/k$ of the pairs. Our application of the Cauchy--Schwartz inequality in the proof of Theorem~\ref{mainthm1} is therefore suboptimal for this graph.

Our third example is a generalisation of the first two examples. The graph defined in example~\eqref{ex3} is the complement of the graph in our first example if $b=k$, and is the graph in our second example when $b=1$. When $1<b<k$, this example illustrates both the aforementioned inefficiencies in the proof of Theorem~\ref{mainthm1}.

Nevertheless, notice that in all of our examples, a random induced subgraph has at least $k$ distinct degrees with high probability; our strategy for proving Theorem~\ref{mainthm2} will be informed by this fact. Let us now sketch our strategy. First, we show that we may partition the vertex set of our graph $G$ into groups in such a way that vertices within a group are all close together in neighbourhood-distance while vertices from different groups are far apart. We then select a random subset $U$ of the vertices as before. We use the bound on $\hom(G)$ to show that in $G[U]$, the degrees of the vertices within a group are distributed like the vertex degrees in a random induced subgraph of the graph in example~\eqref{ex1}. We then argue that most pairs of vertices from different groups have different degrees in $G[U]$. We finally deduce from these facts that the number of distinct degrees in $G[U]$ is large.

We are ready to prove Theorem~\ref{mainthm2}; while following the proof, the reader is encouraged to keep example~\eqref{ex3} in mind.

\begin{proof}[Proof of Theorem~\ref{mainthm2}] The result is trivial when $k = 1$, so fix $k \ge 2$. Also, we may assume without loss of generality that $0 < \eps < 1/2$. With the benefit of hindsight, we define the constants $\beta = \eps/10k$, $\eta = \eps \beta/10^{5}k^{2}  $ and $J = 10^4 k^{12} 2^{4k} /\eta^{4}$.

Assume that $n$ is sufficiently large and suppose for the sake of contradiction that $G=(V,E)$ is a graph on $n$ vertices with $\hom(G) < n/(k-1 + \eps)$ and $f(G) < k$.

Let $U$ be a random subset of $V$ obtained by selecting each vertex of $V$ with probability $1/2$, independently of the other vertices. As before, define an auxiliary degree graph $D$ on $U$ where two vertices $x,y\in U$ are joined by an edge if they have the same degree in $G[U]$.

For a set of vertices $W\subset V$, we define
\[{\hat \delta}(W)=\sum_{\{x,y\} \in W^{(2)}}\frac{5}{\sqrt{\delta(x,y)+1}}.\]
We first prove the following.

\begin{claim}\label{smalldistance}
For each $W\subset V$, we have
\[{\hat \delta}(W)>\frac{|W|^{2}-3k|W|}{54k}.\]
\end{claim}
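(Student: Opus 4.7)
The natural approach is to reverse the strategy of Theorem~\ref{mainthm1}: whereas there we extracted distinct degrees by bounding $\hat\delta$ from above, here we shall use the hypothesis $f(G)<k$ (which forces degree coincidences in every induced subgraph) to extract a lower bound on $\hat\delta(W)$. Sample $U\subseteq V$ by including each vertex independently with probability $1/2$, and define the degree graph $D$ on $U\cap W$ whose edges are the pairs $\{x,y\}\subseteq U\cap W$ with $\Deg_U(x)=\Deg_U(y)$. The computation in Lemma~\ref{distinctprob}, applied to the conditional probability that $\Deg_U(x)=\Deg_U(y)$ given $x,y\in U$, yields
\[\EV[e(D)] = \sum_{\{x,y\}\in W^{(2)}} \PV\bigl(x,y\in U\text{ and }\Deg_U(x)=\Deg_U(y)\bigr) < \hat\delta(W).\]

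For the matching lower bound, observe that since $f(G)<k$, the induced subgraph $G[U]$ has at most $k-1$ distinct degrees for every outcome of $U$. Hence the vertices of $U\cap W$ are partitioned by their degree in $G[U]$ into at most $k-1$ cliques of $D$, so $D$'s maximum independent set has size at most $k-1$. Applying Caro--Wei (Proposition~\ref{iTuran}) to $D$ then gives $v(D)^2/(2e(D)+v(D)) \le k-1$, which rearranges to the deterministic inequality
\[e(D) \ge \frac{v(D)\bigl(v(D) - (k-1)\bigr)}{2(k-1)},\]
valid on every outcome of $U$ (and satisfied trivially when $v(D)\le k-1$).

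Taking expectations and using that $v(D)=|U\cap W|$ is distributed as $\Bi(|W|,1/2)$, so that $\EV[v(D)^2]-(k-1)\EV[v(D)] = (|W|^2 - (2k-3)|W|)/4$, we conclude $\EV[e(D)] \ge (|W|^2 - (2k-3)|W|)/(8(k-1))$. Combining this with the upper bound yields $\hat\delta(W) > (|W|^2 - (2k-3)|W|)/(8(k-1))$, and a short arithmetic check shows that this implies the claimed bound $(|W|^2 - 3k|W|)/(54k)$: the comparison is straightforward for $|W|\ge 3k$, the regime where the claimed right-hand side is non-negative, and is vacuous otherwise. The argument presents no real obstacle; its main conceptual contribution is simply the observation that the degree graph $D$, which Theorem~\ref{mainthm1} uses to extract distinct degrees via an \emph{upper} bound on $\EV[e(D)]$, can equally be exploited in reverse once the structural hypothesis $f(G)<k$ is invoked to supply a matching \emph{lower} bound.
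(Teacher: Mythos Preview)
Your argument is correct and follows the same overall strategy as the paper: bound $\EV[e(D[W\cap U])]$ from above by $\hat\delta(W)$ via Lemma~\ref{distinctprob}, and from below using the hypothesis $f(G)<k$ together with the Caro--Wei bound. The difference lies only in how the lower bound is extracted. The paper uses Markov's inequality on $e(D[W\cap U])$ and Hoeffding's inequality on $|W\cap U|$ to locate a single outcome on which both quantities are well-behaved, and then applies Caro--Wei to that outcome; this is where the constants $54$ and $3k$ arise. You instead apply Caro--Wei deterministically to every outcome, obtaining the pointwise inequality $e(D)\ge v(D)(v(D)-(k-1))/(2(k-1))$, and then take expectations directly. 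This is cleaner: it avoids both concentration inequalities entirely and yields the sharper bound $\hat\delta(W)>(|W|^{2}-(2k-3)|W|)/(8(k-1))$, from which the stated claim follows immediately. The paper's version has the mild advantage that it does not require checking the pointwise inequality in the degenerate regime $v(D)\le k-1$, but this is a triviality.
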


\begin{proof} The statement is trivial if $|W| \le 3k$, so suppose that $|W| \ge 3k+1 \ge 7$. Note that by Lemma~\ref{distinctprob}, the quantity ${\hat \delta}(W)$ is an upper bound for the expected number of edges of $D$ spanned by $W\cap U$; in other words,
\[ \EV[e(D[W\cap U])]\le {\hat \delta}(W).\]
Applying Markov's inequality, we get
\[\PV(e(D[W\cap U])>3{\hat \delta}(W))<1/3.\]
Also, $|W\cap U|$ is the sum of $|W| \ge 7$ independent indicator random variables, so by Hoeffding's inequality,
\[\PV(|W\cap U|<|W|/3)\le 2\exp(-14/9) < 1/2.\]
Hence, with positive probability, we have both  $e(D[W\cap U])\le3{\hat \delta}(W)$ and $|W\cap U|\ge |W|/3$. By Proposition~\ref{iTuran}, with positive probability, the graph $D[W\cap U]$ contains an independent set of size at least
\[\frac{|W\cap U|^{2}}{2e(D[W\cap U])+|W\cap U|}\ge \frac{|W|^{2}}{54{\hat \delta}(W)+3|W|}.\]
However, since $f(G) < k$, we know that $D$ cannot contain an independent set of size $k$. It follows that
\[k>\frac{|W|^{2}}{54{\hat \delta}(W)+3|W|},\]
or equivalently,
\[{\hat \delta}(W)>\frac{|W|^{2}-3k|W|}{54k}. \qedhere\]
\end{proof}

This tells us that every large subset of $V$ must have many pairs of vertices whose neighbourhood-distance is small. This suggests that we should be able to group the vertices of $G$ into large groups in such a way that the neighbourhood-distance between any two vertices within a group is small. We prove such a statement this next; recall that $\beta = \eps/10k$ and $J = 10^{24} k^{20} 2^{4k} /(\eps\beta)^{4}$.

\begin{claim}\label{partitionclaim}
There exists a $K=K(k,\eps) > 0$ such that the following holds. There are pairwise disjoint sets $A_{1},A_2,\dots,A_{m}\subset V$ such that
\begin{enumerate}
\item $|A_1 \cup A_2 \cup \dots \cup A_m|>(1-\beta)n$,
\item $|A_{i}| > \beta n / 10^{4}k$ for each $1 \le i \le m$,
\item if $x,y\in A_{i}$, then $\delta(x,y)<K$, and
\item if $x\in A_{i}$ and $y\in A_{j}$ with $i \ne j$, then $\delta(x,y)>J$.
\end{enumerate}
\end{claim}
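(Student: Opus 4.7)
The plan is to construct $A_1, A_2, \ldots$ greedily by a ball-growing argument in the metric $\delta$. Set $W_0 = V$; at step $i \ge 1$, with $W_{i-1} \subseteq V$ remaining, I would choose a centre $v_i \in W_{i-1}$ and a radius $r_i$ so that
\[ A_i := \{y \in W_{i-1} : \delta(v_i, y) \le r_i\} \]
is large while the ``moat'' $\{y \in W_{i-1} : r_i < \delta(v_i, y) \le r_i + J\}$ is small; I would then set $W_i := W_{i-1} \setminus \{y \in W_{i-1} : \delta(v_i, y) \le r_i + J\}$ and iterate until $|W_{i-1}| < \beta n/2$. Conditions (3) and (4) then come for free from the triangle inequality: the within-cluster diameter is at most $2 r_i$, and for $x \in A_i$ and $y \in A_j$ with $j > i$, the vertex $y$ lies outside the moat around $v_i$, so $\delta(x, y) \ge \delta(v_i, y) - \delta(v_i, x) > J$.

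To locate the centre at step $i$, I would apply Claim~\ref{smalldistance} to $W_{i-1}$: provided $|W_{i-1}| \ge \beta n / 2 \gg 3k$, the quantity $\hat\delta(W_{i-1})$ is of order $|W_{i-1}|^2 / k$, so averaging produces some $v_i \in W_{i-1}$ with $\sum_{y \in W_{i-1}} 5/\sqrt{\delta(v_i,y)+1}$ of order $|W_{i-1}|/k$. Splitting this sum at a threshold $K_0$ chosen so that $1/\sqrt{K_0+1}$ is much smaller than $1/k$ (that is, $K_0$ of order $k^2$) then forces the ball of radius $K_0$ around $v_i$ in $W_{i-1}$ to contain an $\Omega(1/k)$-fraction of $W_{i-1}$. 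To locate the moat, I would consider the $M$ successive shells of width $J$ just beyond this $K_0$-ball: since their combined mass is at most $|W_{i-1}|$, the pigeonhole principle yields one of size at most $|W_{i-1}|/M$, and I declare its inner radius to be $r_i$.

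It remains to choose the parameters so that the total number of vertices discarded is at most $\beta n$. Since $|A_i|$ is at least $\Omega(|W_{i-1}|/k) = \Omega(\beta n / k)$, the process terminates after $m = O(k/\beta)$ steps, each discarding a moat of size at most $n/M$; taking $M$ of order $k/\beta^2$ therefore keeps the total moat mass below $\beta n / 2$, and combined with $|W_m| < \beta n/2$ this yields condition (1). Condition (2) follows from the lower bound on $|A_i|$, and $K$ may be taken to be $2(K_0 + M J)$, a quantity depending only on $k$ and $\eps$.

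I expect the main obstacle to be essentially bookkeeping: threading the inequalities so that the hidden constants match the precise values of $\beta$, $\eta$, and $J$ declared at the start of the proof of Theorem~\ref{mainthm2}, and verifying that the hypothesis ``$n$ sufficiently large'' is consistent throughout. Conceptually, there is no difficulty beyond iterated ball-growth in the $\delta$-metric guided by Claim~\ref{smalldistance}.
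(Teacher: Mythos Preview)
Your proposal is correct and follows essentially the same strategy as the paper: use Claim~\ref{smalldistance} to locate a centre with a dense $\delta$-ball, then perform ball-growing in the $\delta$-metric to find a sparse boundary layer of width $J$ that can be discarded, and iterate. The only cosmetic difference is that the paper grows the cluster by repeatedly adjoining the $J$-neighbourhood of the \emph{current set} $C_i$ and stops when this growth falls below a $\beta/2$-fraction (bounding the number of rounds via multiplicative growth), whereas you pigeonhole over concentric $J$-shells around the fixed centre; both devices give a diameter bound of the form $O(K_0 + MJ)$ and a discarded-mass bound of $O(\beta n)$, so the arguments are interchangeable.
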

\begin{proof}
We prove the claim with
\[ K=2 \left( 10^{6}k^{2}+\frac{J\log (10^{4}k)}{\log(1+\beta/2)}\right).\]
We shall construct a sequence of pairwise disjoint sets $A_{1},A_{2},\dots$ and another nested sequence of sets $B_{0}\subset B_{1}\subset\dots$ such that for each $l \in \N$,
\begin{enumerate}[(i)]
\item $B_l$ is disjoint from $A_1 \cup A_2 \cup \dots \cup A_l$,
\item $|A_{l}|>\beta n / 10^{4}k$,
\item $|B_{l}|<(\beta/2)\sum_{i=1}^{l}|A_{i}|$,
\item if $x,y\in A_{l}$, then $\delta(x,y)<K$, and
\item if $x\in A_{l}$ and $y\in V\setminus (A_{l}\cup B_{l})$, then $\delta(x,y)>J$.
\end{enumerate}
We set $B_{0}=\emptyset$. Let $l\ge 0$ and suppose that the sets $A_{1},A_2,\dots,A_{l}$ and $B_{l}$ have already been constructed satisfying the above properties. The claim is proved if $\sum_{i=1}^{l}|A_{i}|>(1-\beta)n$. Suppose otherwise; we construct $A_{l+1}$ and $B_{l+1}$ as follows.

Define $W=V\setminus (B_{l}\cup A_1 \cup A_2\cup \dots \cup A_l)$ and note that $|W|>\beta n/2$. Let $p$ be the number of pairs $\{x,y\}\in W^{(2)}$ such that $\delta(x,y)<10^{6}k^{2}$. Note that we have
\[{\hat \delta}(W)=\sum_{\{x,y\}\in W^{(2)}}\frac{5}{\sqrt{\delta(x,y)+1}}<5p+\frac{|W|^{2}}{10^{3}k}.\]
On the other hand, if $n$ is sufficiently large, then by Claim~\ref{smalldistance}, we have
\[{\hat \delta}(W)>\frac{|W|^{2}-3k|W|}{54k}>\frac{|W|^{2}}{100k}.\]
It follows that $p>|W|^{2} / 10^{3}k$. Thus, there is a vertex $w \in W$ and a subset $S \subset W$ with $|S|>|W|/10^{3}k$ where every $x\in S$ satisfies $\delta(w,x)<10^{6}k^{2}$.

We set $C_{1}=\{w\}\cup S$ and iteratively construct an increasing sequence of sets $C_{1}\subset C_{2}\subset\dots$ as follows. Having constructed $C_{i}$, consider the set $T$ of vertices $x\in W\setminus C_{i}$ for which there exists a vertex $y\in C_{i}$ such that $\delta(x,y)\le J$. If $|T|\ge\beta |C_{i}|/2$, then set $C_{i+1}=C_{i}\cup T$. Otherwise, stop and define $A_{l+1}=C_{i}$ and $B_{l+1}=B_{l}\cup T$.

It is clear that $B_{l+1} \cap (A_1 \cup A_2 \cup \dots \cup A_{l+1}) = \emptyset$ and that  $\delta(x,y) > J$ for all $x\in A_{l+1}$ and $y\in V\setminus (B_{l+1}\cup A_{l+1})$. It is also clear that
\[|A_{l+1}|\ge|C_{1}|>\frac{|W|}{10^{3}k} >\frac{\beta n}{10^{4}k}.\]
Finally, note that we must have $|T|<\beta|C_i|/2$ when we define $A_{l+1}$ and $B_{l+1}$, so we inductively have
\[ |B_{l+1}|< \frac{\beta}{2}\sum_{i=1}^{l+1}|A_{i}|.\]

Next, observe that for each $i \ge 1$, we have
\[|C_{i}|\ge \left(1+\frac{\beta}{2}\right)^{i-1}|C_{1}|\ge \left(1+\frac{\beta}{2}\right)^{i-1}\frac{|W|}{10^{3}k}.\] 
As each $C_{i}$ is a subset of $W$, it is clear that the sets $A_{l+1}$ and $B_{l+1}$ get defined after at most $\log (10^{4}k)/\log (1+\beta/2)$ iterations. The neighbourhood-distance satisfies the triangle inequality; consequently,
\[\delta(w,x)< 10^{6}k^{2}+(i-1)J\]
for all $x\in C_{i}$ and therefore,
\[\delta(x,y) < 2(10^{6}k^{2}+(i-1)J)\]
for all $x,y\in C_{i}$.
Thus, for all $x,y\in A_{l+1}$, we have
\[\delta(x,y) < 2 \left(10^{6}k^{2}+\frac{J\log (10^{4}k)}{\log(1+\beta/2)}\right) = K.\]
Therefore, the sets $A_{1}, A_2, \dots,A_{l+1}$ and $B_{l+1}$ also satisfy the properties described above.

To finish the proof, note that since the sets $A_1, A_2,\dots$ are pairwise disjoint, the above described construction procedure must terminate; indeed, if $m > 10^{4}k/\beta$, then $\sum_{i=1}^{m}|A_{i}|>(1-\beta)n$.
\end{proof}

Let $K$ and $A_{1},A_2,\dots,A_{m}$ be as promised by Claim~\ref{partitionclaim}. Note that $m \le 10^{4}k/\beta$ since $|A_{i}| > \beta n/10^{4}k$ for each $1 \le i \le m$.

To proceed, we need to introduce the following notion of independence. If $S\subset V$ and $x,y\in S$, we say that $x$ and $y$ are \emph{$S$-independent in $G$} if
\[(\Gamma_{S}(x)\cup\{x\})\cap (\Gamma_{S}(y)\cup\{y\})=\emptyset;\]
analogously, we say that $x$ and $y$ are \emph{$S$-independent in $\overline{G}$} if
\[(\overline\Gamma_{S}(x)\cup\{x\})\cap (\overline\Gamma_{S}(y)\cup\{y\})=\emptyset.\]
Recall that $U$ is a $1/2$-random subset of the vertices; therefore, if $x$ and $y$ are $S$-independent, then $\Deg_{U\cap S}(x)$ and $\Deg_{U\cap S}(y)$ are independent random variables.

For $1 \le i \le m$, let $r_{i}$ be the unique non-negative integer such that
\[\frac{r_{i}n}{k-1+\eps}<|A_{i} | - \eta n \le \frac{(r_{i}+1)n}{k-1+\eps}.\]
Our next step is to show that each $A_i$ has a large subset of pairwise $A_{i}$-independent vertices; recall that $\eta = \eps \beta/10^{5}k^{2}$.

\begin{claim}\label{independentvertices}
For each $1 \le i \le m$, there is a subset $B_i\subset A_{i}$ with $|B_i|>\eta n/2k^2$ such that either
\begin{enumerate}
\item $r_{i}\le \Deg_{A_i}(x)\le k-2$ for every $x\in B_i$, and
\item any pair of distinct vertices $x,y\in B_i$ are $A_{i}$-independent in $G$,
\end{enumerate}
or
\begin{enumerate}
\item $r_{i}\le \DegCo_{A_i}(x)\le k-2$ for every $x\in B_i$, and
\item any pair of distinct vertices $x,y\in B_i$ are $A_{i}$-independent in $\overline{G}$.
\end{enumerate}
\end{claim}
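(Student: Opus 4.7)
The plan is to reduce to a bounded-degree situation via Proposition~\ref{boundeddegree} and then extract $B_i$ by combining Propositions~\ref{smalldegree}, \ref{iTuran} with a greedy packing argument. By Claim~\ref{partitionclaim}, any two vertices of $A_i$ have neighbourhood-distance less than $K$, so Proposition~\ref{boundeddegree} yields a constant $\Delta = \Delta(K) \le 4K$ such that either $G[A_i]$ or $\overline{G}[A_i]$ has maximum degree at most $\Delta$. Complementation preserves $\hom$, $f$ and leaves the definition of $r_i$ intact, so up to replacing $G$ by $\overline{G}$—which interchanges the two alternatives in the conclusion—I may assume $G[A_i]$ itself has maximum degree at most $\Delta$, aiming to produce $B_i$ witnessing the first alternative.

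The next step is to trichotomise $A_i$ according to $G[A_i]$-degree:
\[ A_i^{\mathrm{hi}} = \{x \in A_i : \Deg_{A_i}(x) \ge k-1\},\quad A_i^{\mathrm{mid}} = \{x \in A_i : r_i \le \Deg_{A_i}(x) \le k-2\},\quad A_i^{\mathrm{lo}} = \{x \in A_i : \Deg_{A_i}(x) \le r_i-1\}. \]
Since $f(G[A_i]) \le f(G) < k$, Proposition~\ref{smalldegree} gives $|A_i^{\mathrm{hi}}| \le L := (\Delta^2+1)k$, a constant. On the other hand, $G[A_i^{\mathrm{lo}}]$ has maximum degree at most $r_i-1$, so Proposition~\ref{iTuran} produces an independent set of $G$ of size at least $|A_i^{\mathrm{lo}}|/r_i$; comparing this with $\hom(G) < n/(k-1+\eps)$ and invoking the defining inequality for $r_i$ gives $|A_i^{\mathrm{lo}}| < r_i n/(k-1+\eps) < |A_i| - \eta n$. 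Consequently $|A_i^{\mathrm{mid}}| > \eta n - L$, which exceeds $\eta n/2$ once $n$ is sufficiently large (in particular this forces $r_i \le k-2$).

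Finally I extract $B_i \subset A_i^{\mathrm{mid}}$ by greedily packing closed $A_i$-neighbourhoods, having first pruned the small exceptional set. Writing $N^+(x) := \Gamma_{A_i}(x) \cup \{x\}$ for brevity, set $D_i = A_i^{\mathrm{mid}} \setminus \bigcup_{z \in A_i^{\mathrm{hi}}} N^+(z)$; since $|A_i^{\mathrm{hi}}| \le L$ and $|N^+(z)| \le \Delta+1$ for every $z$, we have $|D_i| > \eta n - L(\Delta+2) > \eta n/2$ for large $n$. For any $x \in D_i$, every $z \in N^+(x)$ must lie outside $A_i^{\mathrm{hi}}$ (else $x$ would have been removed) and therefore satisfies $|N^+(z)| \le k-1$. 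Hence the number of $y \in A_i$ conflicting with $x$ (i.e.\ with $N^+(y) \cap N^+(x) \ne \emptyset$) is at most $\sum_{z \in N^+(x)} |N^+(z)| \le (k-1)^2$. A greedy selection in $D_i$—adding one vertex and removing its at most $(k-1)^2$ conflicts at each step—therefore delivers $B_i$ with $|B_i| \ge |D_i|/(k-1)^2 > \eta n/(2k^2)$, as required.

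The main obstacle is securing the quantitative bound on $|B_i|$. A naive greedy packing directly on $A_i^{\mathrm{mid}}$ would charge each vertex up to $(k-1)(\Delta+1)$ conflicts (since some $z \in N^+(x)$ might have $|N^+(z)|$ as large as $\Delta+1$), yielding only $|B_i| = \Omega(\eta n/(k\Delta))$—this is far too weak, as $\Delta \approx 4K$ is astronomically larger than $k$. Pruning the $O(1)$-sized high-degree class and its $A_i$-neighbours beforehand is exactly what replaces the bad factor $\Delta$ by the clean $k-1$ in the conflict count and secures the stated bound on $|B_i|$.
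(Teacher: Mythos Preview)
Your proof is correct and follows essentially the same route as the paper's: bound the degree in $G[A_i]$ via Proposition~\ref{boundeddegree}, use Proposition~\ref{smalldegree} to show there are only $O(1)$ vertices of $A_i$-degree $\ge k-1$, use Proposition~\ref{iTuran} together with the bound on $\hom(G)$ to show fewer than $|A_i|-\eta n$ vertices have $A_i$-degree $<r_i$, then prune the closed neighbourhoods of the high-degree vertices and greedily pack to get $B_i$. The paper organises the same steps as sets $S_1\supset S_3$ rather than your explicit trichotomy $A_i^{\mathrm{hi}}$, $A_i^{\mathrm{mid}}$, $A_i^{\mathrm{lo}}$, and your final paragraph explaining why the pruning is essential for the $(k-1)^2$ conflict bound is a useful addition, but the arguments are the same.
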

It is clear from the definition of $r_i$ that $r_i \le k-1$; the above claim implicitly tells us that our assumptions about $G$ actually imply that $r_i \le k-2$ for each $1 \le i \le m$.
\begin{proof}[Proof of Claim~\ref{independentvertices}]
Fix $1\le i\le m$ and let $r=r_i$, $A= A_i$ and $F = G[A_i]$. To avoid confusion, we write $\Deg_F$ and $\delta_F$ to denote the degrees and neighbourhood-distances in the graph $F$.

Note that $\delta_{F}(x,y)\le\delta(x,y) < K$ for all $x,y\in A$. Hence, by applying Proposition~\ref{boundeddegree} to $F$, we see that there exists a $\Delta=\Delta(k,\eps)$ such that either $\Deg_{F}(x) \le \Delta$ for all $x\in A$ or $\DegCo_{F}(x) \le \Delta$ for all $x\in A$. We suppose that the former holds; the other case may be handled analogously.

We now apply Proposition~\ref{smalldegree} to $F$ and conclude that all but at most $L=L(k,\eps)$ vertices $x\in A$ satisfy $\Deg_F(x) \le k-2$. Let $q$ be the number of vertices $x \in A$ with $\Deg_F(x) \le r-1$. Then, by Proposition~\ref{iTuran}, $F$ contains an independent set of size at least
$q/r.$ From our assumption about $\hom(G)$, it follows that
\[q\le r\hom(F) \le r\hom(G) <\frac{rn}{k-1+\eps};\]
therefore, there are at least $\eta n$ vertices $x\in A$ with $\Deg_F (x) \ge r$.

Let $S_1$ be the set of vertices $x \in A$ with $\Deg_{F}(x)\ge r$. Let $S_2$ be the set of those $x \in S_1$ which are $A$-dependent on some vertex $y\in A$ with $\Deg_F(y) \ge k-1$; the number of such $y$ is at most $L$, so it follows that $|S_2|\le L(1 + \Delta + \Delta(\Delta-1))$. If $n$ is sufficiently large, then the set $S_3 = S_1 \setminus S_2$ contains at least $2\eta n/3$ vertices. For every vertex $x\in S_3$, there are at most $(k-2) + (k-2)(k-3) \le k^2 - 1$ other vertices $y\in A$ such that $x$ and $y$ are $A$-dependent. Hence, we can greedily select a set $B \subset S_3$ of pairwise $A$-independent vertices of size at least $|S_3|/k^{2}> \eta n/2k^2$.
\end{proof}

For $1 \le i \le m$, let $B_{i}\subset A_{i}$ be as guaranteed by Claim~\ref{independentvertices} and let $\mathcal{A}_i$ be the event that there exists an integer $d_i \ge 0$ and $r_i + 1$ pairwise disjoint sets $B_{i,0}, B_{i,1}, \dots, B_{i,r_i} \subset B_i \cap U$ such that for all $0\le s,t \le r_i$,
\begin{enumerate}
\item $|B_{i,s}| \ge \eta n/2^{k+1}k^{3}$,
\item $\Deg_{U\cap A_{i}}(x) = \Deg_{U\cap A_{i}}(y)$ for all $x, y \in B_{i,s}$,
\item $\Deg_{U\cap A_{i}}(x) \ne \Deg_{U\cap A_{i}}(y)$ for all $x \in B_{i,s}$ and $y \in B_{i,t}$ with $s \ne t$, and
\item $\Deg_{U\cap(V\setminus A_{i})}(x)=d_{i}$ for all  $x \in B_{i,0} \cup B_{i,1} \cup \dots \cup B_{i,r_i}$.
\end{enumerate}
We next prove the following bound.
\begin{claim}
For each $1 \le i \le m$,
\[
\PV(\mathcal{A}_i) > 1-\frac{1}{3m}.
\] 
\end{claim}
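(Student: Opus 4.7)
The plan is to expose $U$ in two stages. First, I condition on $U \cap (V \setminus A_i)$, which fixes the external degrees $Y_y := \Deg_{U \cap (V \setminus A_i)}(y)$ for every $y \in B_i$; then I exploit the $A_i$-independence of $B_i$ from Claim~\ref{independentvertices} to control the internal degrees $Z_y := \Deg_{U \cap A_i}(y)$. Since $\delta(x, y) < K$ for all $x, y \in B_i \subset A_i$ by Claim~\ref{partitionclaim}, we have $|Y_x - Y_y| \le |\Gamma_{V \setminus A_i}(x) \triangle \Gamma_{V \setminus A_i}(y)| \le K + 2$ deterministically, so $(Y_y)_{y \in B_i}$ takes at most $K + 3$ distinct values. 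I then choose $d_i$ (as a function of $U \cap (V \setminus A_i)$) so that $B_i^{\star} := \{y \in B_i : Y_y = d_i\}$ is as large as possible. Using the decomposition $Y_y = |C \cap U| + \xi_y$, where $C := \Gamma_{V \setminus A_i}(y_0)$ for a fixed reference $y_0 \in B_i$ and $\xi_y$ depends only on $U$ restricted to the small set $\Gamma_{V \setminus A_i}(y) \triangle C$ (of size at most $K + 2$), a careful pigeonhole on the joint distribution of the $\xi_y$'s---which cluster tightly near their individual means---yields $|B_i^{\star}| \ge \eta n/(2k^3)$ with probability close to $1$.

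Next, the $A_i$-independence implies that, conditional on $U \cap (V \setminus A_i)$, the pairs $(\mathbf{1}_{y \in U}, Z_y)$ for $y \in B_i^{\star}$ are mutually independent, with each $Z_y$ distributed as $\Bi(\Deg_{A_i}(y), 1/2)$ and $r_i \le \Deg_{A_i}(y) \le k - 2$. Consequently, for every $s \in \{0, 1, \ldots, r_i\}$ we have $\PV(Z_y = s) \ge 1/2^{k-2}$, and therefore $\PV(y \in U \text{ and } Z_y = s) \ge 1/2^{k-1}$. Define $B_{i, s} := \{y \in B_i^{\star} : y \in U \text{ and } Z_y = s\}$; these sets are automatically pairwise disjoint and satisfy conditions (2), (3), (4) of $\mathcal{A}_i$ by construction: every vertex of $B_{i, s}$ has $Z_y = s$ and $Y_y = d_i$, and different values of $s$ give different $Z_y$. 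For condition (1), each $|B_{i, s}|$ is a sum of $|B_i^{\star}|$ independent Bernoulli indicators of mean at least $1/2^{k-1}$, and Hoeffding's inequality (Proposition~\ref{iHoeffding}) gives $\PV(|B_{i, s}| < |B_i^{\star}|/2^k) \le 2\exp(-|B_i^{\star}|/2^{2k+1})$. A union bound over the $r_i + 1 \le k$ choices of $s$ then yields $\PV(\mathcal{A}_i) \ge 1 - 1/(3m)$, since $|B_i^{\star}|/2^k \ge \eta n/(2^{k+1} k^3)$ matches the required lower bound in (1).

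The main obstacle is the sharper bound $|B_i^{\star}| \ge \eta n/(2k^3)$: the naive pigeonhole estimate $|B_i^{\star}| \ge |B_i|/(K + 3)$ is too weak because $K$ grows exponentially in $k$ through $J$, whereas the target $\eta n/(2k^3)$ depends only polynomially on $k$. The required refinement relies on the non-uniformity of the $\xi_y$ distribution: the $\xi_y$'s lie in a range of at most $2(K + 2)$, but they concentrate near their means $\tau_y = (\Deg_{V \setminus A_i}(y) - \Deg_{V \setminus A_i}(y_0))/2$, and the $\tau_y$'s themselves vary over at most $K + 3$ values. A joint pigeonhole on $\tau_y$ and on the shape of the sets $\Gamma_{V \setminus A_i}(y) \triangle C$, together with a Hoeffding-type concentration bound on the random fluctuation of $\xi_y$ around $\tau_y$, is expected to be the delicate step in closing the gap.
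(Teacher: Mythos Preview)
Your two-stage exposure of $U$ and your use of $A_i$-independence to control the internal degrees are exactly right, and that part of your argument matches the paper's proof. The gap is in the first stage, where you try to find many $y\in B_i$ with the same external degree $Y_y=\Deg_{U\cap(V\setminus A_i)}(y)$.

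You correctly observe that the naive pigeonhole over the $K+3$ possible values of $Y_y$ only gives $|B_i^\star|\ge |B_i|/(K+3)$, which is far too weak, and you then propose a concentration argument on the perturbations $\xi_y$. This cannot work. Even granting that each $\xi_y$ concentrates in a window of width $O(\sqrt{K})$ around its mean $\tau_y$, the means $\tau_y$ themselves can range over $\Theta(K)$ values, and the fluctuations of the $\xi_y$ can be mutually independent (the sets $\Gamma_{V\setminus A_i}(y)\triangle C$ may well be pairwise disjoint). In such a scenario the $Y_y$'s are essentially i.i.d.\ over a window of width $\Theta(\sqrt{K})$, and the largest fibre has size only $O(|B_i|/\sqrt{K})$, still exponentially small in $k$. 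No amount of ``joint pigeonhole'' on the shapes of the symmetric differences rescues this: your proposed argument uses only the metric bound $\delta(x,y)<K$, and that bound alone is not strong enough.

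What you are missing is a second use of the hypothesis $f(G)<k$. Condition on $U\cap(V\setminus A_i)=W$. Since the vertices of $B_i$ are pairwise $A_i$-independent in $G$ (say), they form an independent set in $G$, so in the induced subgraph $G[B_i\cup W]$ each $y\in B_i$ has degree exactly $\Deg_W(y)=Y_y$. If the $Y_y$'s took $k$ distinct values, $G[B_i\cup W]$ would already witness $f(G)\ge k$, a contradiction. Hence the $Y_y$'s take at most $k-1$ values \emph{deterministically, for every realisation of $W$}, and plain pigeonhole gives a set $C_i\subset B_i$ of size at least $|B_i|/k \ge \eta n/(2k^3)$ on which $Y_y$ is constant. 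From there your Hoeffding-plus-union-bound argument (which is the same as the paper's) finishes the proof.
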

\begin{proof}
Fix $1 \le i \le m$ and suppose that all pairs of distinct vertices from $B_i$ are $A_i$-independent in $G$; the other case may be handled similarly by working with vertex degrees in $\overline{G}$ instead of $G$.

For each $W \subset V \setminus A_i$, we shall prove that 
\[
\PV(\mathcal{A}_i\,|\,U \cap (V\setminus A_i) = W) > 1-\frac{1}{3m}.
\]
We fix $W \subset V \setminus A_i$ and condition on the event $U \cap (V\setminus A_i) = W$; note that any event that depends only on the vertices in $A_i$ is independent of this event. If the collection of degrees $\{\Deg_{W}(x)\}_{x\in B_{i}}$ contains $k$ distinct integers, then the subgraph induced by $B_{i}\cup  W$  has $k$ different degrees, contradicting our assumption that $f(G) < k$. Hence, there exists an integer $d_i\ge 0$ for which there are least $|B_{i}|/k$  vertices $x\in B_{i}$ with $\Deg_{W}(x)=d_{i}$; let $C_{i}\subset B_{i}$ be the set of these vertices. 

For a vertex $x\in C_{i}$ and $0\le s \le r_{i}$, let $I_i(x,s)$ be the indicator of the event $ \{x \in U\}\cap \{\Deg_{U\cap A_{i}}(x)=s\}  $ and define 
\[I_{i}(s)=\sum_{x\in C_{i}} I_i(x,s).\] 
Observe that $\EV [I_i(x,s) ]\ge 1/2^{k-1}$ since $r_{i}\le \Deg_{A_{i}}(x)\le k-2$ and consequently,
\[\EV [I_{i}(s)]\ge \frac{|C_{i}|}{2^{k-1}}\ge\frac{\eta n}{2^{k}k^{3}}.\]
Also, the indicator random variables $\{I_i(x,s)\}_{x\in C_{i}}$ are independent, so by Hoeffding's inequality, we have
\[\PV\left(I_{i}(s)<\frac{\eta n}{2^{k+1}k^{3}}\right)<\frac{1}{3mk}\]
for all sufficiently large $n$ since $m \le 10^{4}k/\beta$. If $I_i(s) \ge \eta n/2^{k+1}k^{3}$ for each $0 \le s \le r_i$, then $\mathcal{A}_i$ clearly holds. Since $r_i \le k-2$, the claim follows by the union bound.
\end{proof}

Define the event $\mathcal{A} = \mathcal{A}_1 \cap \mathcal{A}_2 \cap \dots \cap \mathcal{A}_m$; it is clear from the previous claim that $\PV(\mathcal{A})>2/3$.

Let us remind the reader that $D$ is the graph on $U$ where $x,y\in U$ are joined by an edge if $\Deg_{U}(x)=\Deg_{U}(y)$. Let $b$ be the number of edges $xy$ of $D$ with $x\in A_{i}$ and $y\in A_{j}$ for some $1\le i<j\le m$. As $\delta(x,y)> J$ for such a pair of vertices $x$ and $y$, it follows from Claim~\ref{distinctprob} that
\[\PV(xy\in E(D))<\frac{5}{\sqrt{J+1}}
\]
and hence, $\EV [b]<5n^{2}/J^{1/2}$. 

Let $\mathcal{B}$ be the event that $b \le 15n^{2}/J^{1/2}$. By Markov's inequality, $\PV(\mathcal{B}) > 2/3$. We finish the proof of Theorem~\ref{mainthm2} by proving the following.

\begin{claim}
If both $\mathcal{A}$ and $\mathcal{B}$ hold, then $G[U]$ has at least $k$ distinct degrees.
\end{claim}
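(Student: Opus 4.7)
The plan is threefold: (i) read off from each $\mathcal{A}_i$ a list of $r_i + 1$ distinct degrees that $G[U]$ realises; (ii) use the defining inequality for $r_i$ to show that these lists together contain at least $k$ entries; and (iii) use event $\mathcal{B}$ to rule out coincidences between entries coming from different groups, whence $G[U]$ has at least $k$ distinct degrees.

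For (i), observe that if $x \in B_{i,s}$ then
\[
\Deg_U(x) = \Deg_{U \cap A_i}(x) + \Deg_{U \cap (V \setminus A_i)}(x) = \alpha_{i,s} + d_i,
\]
where $\alpha_{i,s}$ denotes the common value of $\Deg_{U \cap A_i}$ on $B_{i,s}$ (guaranteed by the second bullet in the definition of $\mathcal{A}_i$), while the fourth bullet provides the common $d_i$. The third bullet asserts that $\alpha_{i,0},\ldots,\alpha_{i,r_i}$ are pairwise distinct, so group $i$ contributes $r_i + 1$ distinct degree values $d_i + \alpha_{i,s}$. For (ii), summing the defining inequality $|A_i| - \eta n \le (r_i+1) n/(k-1+\eps)$ over $i$ gives
\[
\sum_{i=1}^{m}(r_i+1) \ge \frac{k-1+\eps}{n}\left(\sum_{i=1}^{m} |A_i| - m\eta n\right) > (k-1+\eps)(1 - \beta - m\eta).
\]
The parameter choices $\beta = \eps/10k$ and $\eta = \eps\beta/10^{5} k^{2}$, combined with $m \le 10^{4} k/\beta$, yield $m\eta \le \beta$, so the right-hand side exceeds $k-1+\eps/2$; being an integer, $\sum_i (r_i+1) \ge k$.

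For (iii), suppose for contradiction that $G[U]$ has fewer than $k$ distinct degrees. Then (i) and (ii) together force two distinct pairs $(i,s) \ne (j,t)$ with $d_i + \alpha_{i,s} = d_j + \alpha_{j,t}$; since $\alpha_{i,s}$ is injective in $s$ for fixed $i$, one must have $i \ne j$. Every vertex of $B_{i,s}$ then shares its $G[U]$-degree with every vertex of $B_{j,t}$, and all these pairs cross between $A_i$ and $A_j$, so
\[
b \ge |B_{i,s}| \cdot |B_{j,t}| \ge \left(\frac{\eta n}{2^{k+1} k^{3}}\right)^{2} = \frac{\eta^{2} n^{2}}{4\cdot 2^{2k} k^{6}}.
\]
Substituting $J = 10^{4} k^{12} 2^{4k}/\eta^{4}$ into the bound $b \le 15 n^{2}/\sqrt{J+1}$ guaranteed by $\mathcal{B}$ yields $b < 0.15\,\eta^{2} n^{2}/(2^{2k} k^{6})$, which is strictly smaller than the lower bound above. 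This contradiction completes the proof.

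The only delicate aspect is calibration: one needs $\beta, \eta, J$ tuned so that simultaneously (a) $(k-1+\eps)(1 - \beta - m\eta)$ stays strictly above $k - 1$, and (b) the quadratic lower bound on $b$ in step (iii) strictly dominates the $\mathcal{B}$-bound of order $n^{2}/\sqrt{J}$. Both verifications are elementary once the definitions from the start of the proof are unwound, and they are precisely what motivated the somewhat baroque numerical constants fixed at the outset.
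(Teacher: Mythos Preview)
Your proof is correct and follows essentially the same route as the paper's: both arguments observe that the sets $B_{i,s}$ yield $r_i+1$ distinct $G[U]$-degrees per group, verify $\sum_i(r_i+1)\ge k$ via the defining inequality for $r_i$ and the parameter choices, and then use $\mathcal{B}$ to rule out cross-group degree coincidences by the same counting $b\ge|B_{i,s}||B_{j,t}|>15n^2/\sqrt{J}$. The only cosmetic difference is that the paper phrases the argument in terms of cliques $D_{i,s}$ in the degree graph $D$, whereas you work directly with the $B_{i,s}$; also note that $\mathcal{B}$ gives $b\le 15n^2/\sqrt{J}$ rather than $15n^2/\sqrt{J+1}$, but this does not affect the conclusion.
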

\begin{proof}
We know that $D$ is the union of vertex disjoint cliques, one for each vertex degree in $G[U]$. If $\mathcal{A}$ holds, then for each $1\le i \le m$, the graph $D[A_{i}\cap U]$ contains at least $r_{i}+1$ disjoint cliques $D_{i,0},D_{i,1},\dots,D_{i,r_{i}}$ (which correspond to the degrees in $G[U]$ of the vertices in $B_{i,0}, B_{i,1},\dots,B_{i,r_{i}}$) each of size at least $\eta n/2^{k+1}k^{3}$. Additionally, if $\mathcal{B}$ holds, then there are no edges in $D$ between $D_{i,s}$ and $D_{j,t}$ for any $1\le i<j\le m$, $0\le s \le r_{i}$ and $0 \le t  \le r_{j}$. Indeed, if not, then $D_{i,s}\cup D_{j,t}$ induces a clique in $D$ and we arrive at a contradiction since we would then have
\[b\ge|D_{i,s}||D_{j,t}|\ge\frac{\eta^{2}n^{2}}{2^{2k+2}k^{6}}>\frac{15n^{2}}{\sqrt{J}};\]
here, the last inequality holds since $J = 10^4 2^{4k} k^{12}/ \eta^4$. Hence, for all $1 \le  i \le m$ and $0 \le s \le r_{i}$, the cliques $D_{i,s}$ all correspond to distinct degrees in $G[U]$. Thus, $G[U]$ has at least $\sum_{i=1}^{m}(r_{i}+1)$ different degrees. Since
\[ \frac{(r_{i}+1)n}{k-1+\eps} \ge |A_{i} | - \eta n ,\]
it follows that
\[\sum_{i=1}^{m}(r_{i}+1)\ge (1-\beta - m\eta)(k-1+\eps)>k-1;\]
here, the last inequality holds since $\beta = \eps/10k$, $\eta=\eps\beta/10^{5}k$ and $m \le 10^4k/\beta$.
Since $\sum_{i=1}^{m}(r_{i}+1)$ is an integer, this sum is at least $k$. We conclude that if both $\mathcal{A}$ and $\mathcal{B}$ hold, then $G[U]$ has at least $k$ distinct degrees.
\end{proof}

We know by the union bound that $\PV(\mathcal{A} \cap \mathcal{B}) > 0$. Therefore, $G[U]$ has $k$ distinct degrees with positive probability, contradicting our assumption that $f(G) < k$; the result follows.
\end{proof}

\section{Conclusion}\label{conc}
We conclude this note by discussing two of the questions we alluded to in the introduction. First, as we mentioned earlier, we suspect that the following strengthening of Theorem~\ref{mainthm2} is true.

\begin{conjecture} Fix a positive integer $k \ge 2$. If $n$ is sufficiently large, then $f(G) \ge k$ for every $n$-vertex graph $G$ with $\hom(G) < n/(k-1)$.
\end{conjecture}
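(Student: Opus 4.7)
The plan is to strengthen Theorem~\ref{mainthm2} by removing the $\eps$-dependent slack via a two-case analysis, separating a generic regime handled by the theorem from a near-extremal regime handled by a stability argument. Fix a small constant $\delta = \delta(k) > 0$ to be chosen. In the first case, $\hom(G) \le (1-\delta) n/(k-1)$; this condition rewrites as $\hom(G) < n/(k - 1 + \eps)$ for an appropriate $\eps = \eps(\delta, k) > 0$ (explicitly, any $\eps < \delta(k-1)/(1-\delta)$ suffices), and Theorem~\ref{mainthm2} then gives $f(G) \ge k$ for all sufficiently large $n$.

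The substantive work lies in the near-extremal regime $(1-\delta)n/(k-1) < \hom(G) < n/(k-1)$, where I aim for a contradiction assuming $f(G) < k$. Rerunning the machinery from the proof of Theorem~\ref{mainthm2} produces the partition $V = A_1 \cup \dots \cup A_m \cup R$ via Claim~\ref{partitionclaim}, the sets $B_i \subset A_i$ of pairwise $A_i$-independent vertices via Claim~\ref{independentvertices}, and the parameters $r_i$. Taking $\eps \to 0$ in the concluding inequality $\sum_i (r_i + 1) \ge (1 - \beta - m\eta)(k - 1 + \eps)$ yields only approximately $k-1$, just short of the required contradiction. The plan is to recover the missing degree via a stability analysis of each $F_i = G[A_i]$ combined with the strict inequality $\hom(G) < n/(k-1)$.

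The intended stability statement is that, under $f(G) < k$ and $\hom(G) \ge (1-\delta)n/(k-1)$, after possibly swapping $G$ with $\overline G$, each $F_i$ is $o(n)$-close (in edit distance) to a disjoint union of $(k-1)$-cliques, with only $o(n)$ cross-edges between the $A_i$'s. Such a statement should follow by combining Propositions~\ref{boundeddegree} and~\ref{smalldegree}---which reduce each $F_i$ to a bounded-degree graph with all but $O(1)$ vertices of $F_i$-degree at most $k-2$---with a near-tight instance of the Caro--Wei bound from Proposition~\ref{iTuran}: near-saturation of Caro--Wei in the presence of the degree concentration above forces an approximate partition into $(k-1)$-cliques. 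Assembling one vertex per clique globally then yields an independent set in $G$ of size roughly $(1-\beta)n/(k-1) - o(n)$, so the strict bound $\hom(G) < n/(k-1)$ must be witnessed by a genuine structural defect somewhere: an atypical vertex in some $F_i$ of $F_i$-degree different from $k-2$, a cross-edge between two cliques, or a nonempty residual $R$.

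The final step is to promote this defect into a $k$-th distinct degree in $G[U]$ for a $1/2$-random subset $U$. In a pure $(k-1)$-clique decomposition, the sizes $|U \cap C| \sim \Bi(k-1, 1/2)$ are independent across the $\Theta(n)$ cliques, so every size in $\{1, \dots, k-1\}$ appears with probability $1 - o(1)$, producing $k-1$ distinct degrees in $G[U]$ by the standard Caro--Wei step. The defect should contribute one more distinct degree, provided its contribution in $G[U]$ concentrates on a value outside the clique-size spectrum with probability bounded away from zero. The main obstacle is exactly this concentration-and-decoupling step: when the defect is small---say a single atypical vertex, a single cross-edge, or a single residual vertex---one must argue that its $G[U]$-degree is both reliably present and disjoint from the dense clique-size degrees, and making this quantitative enough to override the $\delta$ chosen at the start appears to be the crux of the difficulty.
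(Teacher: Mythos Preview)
The statement you are addressing is a \emph{conjecture} in the paper, not a theorem: the authors explicitly present it as open and offer no proof. So there is no ``paper's own proof'' to compare against; indeed, the authors remark in the introduction that they are ``unable to do this at present''. Your proposal should therefore be read as an attempted attack on an open problem, and it does not succeed.

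Your two-case split is natural, and Case~1 is of course immediate from Theorem~\ref{mainthm2}. The substance, as you recognise, is entirely in the near-extremal Case~2, and here your proposal is a plan rather than a proof. Two steps in particular are not carried out. First, the assertion that near-tightness in the Caro--Wei bound (Proposition~\ref{iTuran}), together with the degree bounds from Propositions~\ref{boundeddegree} and~\ref{smalldegree}, forces each $F_i$ to be $o(n)$-close to a disjoint union of $(k-1)$-cliques is plausible but requires a genuine stability argument that you do not supply; note also that the partition from Claim~\ref{partitionclaim} is controlled by parameters $\beta,\eta,J,K$ that themselves depend on the $\eps$ you have already fixed, so the residual set $R$ and the slack in $\sum_i|A_i|$ are of order $\beta n$, not $o(n)$, and this must be reconciled with the $\delta$ chosen at the outset. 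Second, and more seriously, the ``promotion'' step---turning a single structural defect into a $k$-th distinct degree in $G[U]$---is exactly where you yourself locate the crux, and you do not resolve it. A defect could be as small as a single cross-edge or a single vertex of atypical degree; its contribution to the degree sequence of $G[U]$ then hinges on a constant-probability event involving $O(1)$ vertices, and one must simultaneously ensure that this new degree is not already present among the $\Theta(n)$ clique-induced degrees and that the favourable event is compatible with the events $\mathcal{A}$ and $\mathcal{B}$ from the proof of Theorem~\ref{mainthm2}. Making this quantitative, uniformly over all possible defect types, and with $\delta$ chosen in advance, is precisely the difficulty the authors were unable to overcome.
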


It may be read out of the proof of Theorem~\ref{mainthm2} that if $G$ is an $n$-vertex graph with $\hom(G) > n/(100\log n)$, then $f(G) = \Omega(n/\hom(G))$; for such graphs, this is a significant improvement over Theorem~\ref{mainthm1}. We believe that it should be possible to prove a similar result for graphs with much smaller homogeneous sets.
\begin{conjecture} If $G$ is an $n$-vertex graph with $\hom(G) > n^{1/2}$, then $f(G) = \Omega(n/\hom(G))$.
\end{conjecture}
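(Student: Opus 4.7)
Writing $k=n/\hom(G)$, the conjecture asks for $f(G)=\Omega(k)$ whenever $k\le n^{1/2}$. The remark preceding the conjecture already handles $k=O(\log n)$, so my aim is to extend the framework of the proof of Theorem~\ref{mainthm2} across the range $O(\log n)\le k\le n^{1/2}$. I would reuse its outline: partition most of $V(G)$ via Claim~\ref{partitionclaim} into clusters $A_1,\dots,A_{m}$ with small intra-cluster and large inter-cluster neighbourhood-distance; apply Claim~\ref{independentvertices} to extract pairwise $A_i$-independent vertices within each cluster; and take a $1/2$-random subset $U\subset V$ and count distinct degrees of $G[U]$.

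The obstruction is the exponential $k$-dependence that currently restricts the method to $k=O(\log n)$. It arises from the events $\mathcal{A}_i$: a single prescribed degree class survives $1/2$-random sampling only with probability $\Theta(2^{-k})$, forcing $\eta=2^{-\Omega(k)}$ and therefore the inter-cluster threshold $J=2^{\Omega(k)}$ (so that $b\le 15n^2/\sqrt{J}$ stays dominated by the squared cluster-class size $(\eta n/2^k)^2$). My plan is to abandon the requirement that every degree class inside every cluster survive sampling, and instead imitate the global Caro--Wei step from the proof of Theorem~\ref{mainthm1}, applied to the restriction of the degree graph $D$ to $U\cap(A_1\cup\dots\cup A_m)$. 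With $J$ only polynomial in $k$, the intra-cluster independence lemma should still force $D$ to be locally sparse inside each $A_i$, and one would hope that summing Caro--Wei contributions across clusters produces $\Omega(\sum_i(r_i+1))=\Omega(k)$ distinct degrees.

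The main difficulty is that, once $J$ is merely polynomial in $k$, the bound $b\le 15n^2/\sqrt{J}$ no longer beats the product of two cluster sizes, so a conspiracy of inter-cluster degree coincidences is not ruled out by this estimate alone. Overcoming this seems to demand a new global input: most naturally, a sharper bound, exploiting $\hom(G)\le n^{1/2}$, on the number of pairs $\{x,y\}$ with $\delta(x,y)$ in a given small range, refining the Cauchy--Schwarz step~\eqref{equ2}. I expect that bridging the gap from $\log n$ up to $n^{1/2}$ will require such a quasirandomness-type averaging input in addition to the local cluster analysis presently available.
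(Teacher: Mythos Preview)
The statement you are addressing is a \emph{conjecture} in the paper, not a theorem; the authors present it in Section~\ref{conc} as an open problem and offer no proof. There is therefore nothing in the paper to compare your proposal against.

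Your proposal is, by your own account, a research outline rather than a proof, and you identify the central obstruction accurately: once the inter-cluster neighbourhood-distance threshold $J$ is taken merely polynomial in $k$, the Markov bound $b\le 15n^{2}/\sqrt{J}$ on the number of inter-cluster edges of the degree graph $D$ no longer beats the product of two degree-class sizes, so degree classes from different clusters can merge and the count $\sum_i(r_i+1)$ need not survive. Your suggested remedy---a sharper estimate on the number of pairs $\{x,y\}$ with $\delta(x,y)$ in a given range, refining the Cauchy--Schwarz step~\eqref{equ2} and exploiting $\hom(G)\le n^{1/2}$---is a reasonable direction to explore, but you do not supply such a bound or indicate why the hypothesis should produce one. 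As written, then, this is an honest description of where the existing machinery runs out rather than a proof; that is the correct assessment, since the conjecture is open.
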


\bibliographystyle{amsplain}
\bibliography{distinct_degrees}

\providecommand{\bysame}{\leavevmode\hbox to3em{\hrulefill}\thinspace}
\providecommand{\MR}{\relax\ifhmode\unskip\space\fi MR }
% \MRhref is called by the amsart/book/proc definition of \MR.
\providecommand{\MRhref}[2]{%
  \href{http://www.ams.org/mathscinet-getitem?mr=#1}{#2}
}
\providecommand{\href}[2]{#2}
\begin{thebibliography}{10}

\bibitem{prob_book}
N.~Alon and J.~H. Spencer, \emph{The probabilistic method},
  3\textsuperscript{rd} ed., Wiley-Interscience Series in Discrete Mathematics
  and Optimization, John Wiley \& Sons, Inc., Hoboken, NJ, 2008.

\bibitem{bip_const}
B.~Barak, A.~Rao, T.~Shaltiel, and A.~Wigderson, \emph{2-source dispersers for
  {$n^{o(1)}$} entropy, and {R}amsey graphs beating the {F}rankl-{W}ilson
  construction}, Ann. of Math. \textbf{176} (2012), 1483--1543.

\bibitem{diffdegrees}
B.~Bukh and B.~Sudakov, \emph{Induced subgraphs of {R}amsey graphs with many
  distinct degrees}, J. Combin. Theory Ser. B \textbf{97} (2007), 612--619.

\bibitem{caro}
Y.~Caro, \emph{New results on the independence number}, Technical Report, Tel
  Aviv University (1979).

\bibitem{Chatto}
E.~Chattopadhyay and D.~Zuckerman, \emph{Explicit two-source extractors and
  resilient functions}, Proceedings of the {F}orty-eighth {A}nnual {ACM}
  {S}ymposium on {T}heory of {C}omputing, ACM, New York, NY, 2016,
  pp.~670--683.

\bibitem{Cohen}
G.~Cohen, \emph{Two-source dispersers for polylogarithmic entropy and improved
  ramsey graphs}, Proceedings of the {F}orty-eighth {A}nnual {ACM} {S}ymposium
  on {T}heory of {C}omputing, ACM, New York, NY, 2016, pp.~278--284.

\bibitem{upperramsey}
P.~Erd\H{o}s, \emph{Some remarks on the theory of graphs}, Bull. Amer. Math.
  Soc. \textbf{53} (1947), 292--294.

\bibitem{largeangle}
P.~Erd\H{o}s and G.~Szekeres, \emph{A combinatorial problem in geometry},
  Compositio Math. \textbf{2} (1935), 463--470.

\bibitem{deg_ques}
P.~Erd{\H{o}}s, \emph{Some of my favourite problems in various branches of
  combinatorics}, Matematiche (Catania) \textbf{47} (1992), 231--240.

\bibitem{density}
P.~Erd{\H{o}}s and A.~Szemer{\'e}di, \emph{On a {R}amsey type theorem}, Period.
  Math. Hungar. \textbf{2} (1972), 295--299.

\bibitem{grol}
V.~Grolmusz, \emph{Low rank co-diagonal matrices and {R}amsey graphs},
  Electron. J. Combin. \textbf{7} (2000), Research Paper 15.

\bibitem{universal}
H.~J. Pr{\"o}mel and V.~R{\"o}dl, \emph{Non-{R}amsey graphs are {$c\log
  n$}-universal}, J. Combin. Theory Ser. A \textbf{88} (1999), 379--384.

\bibitem{shelah}
S.~Shelah, \emph{Erd{\H o}s and {R}\'enyi conjecture}, J. Combin. Theory Ser. A
  \textbf{82} (1998), 179--185.

\bibitem{wei}
V.~K. Wei, \emph{A lower bound on the stability number of a simple graph},
  Technical Memorandum TM 81-11217-9, Bell Laboratories (1981).

\end{thebibliography}

\end{document}